\DeclareMathOperator{\alf}{alph}
\DeclareMathOperator{\simple}{sim}
\DeclareMathOperator{\mul}{mul}
\DeclareMathOperator{\occ}{occ}
\DeclareMathOperator{\var}{var}
\newtheorem{theorem}{Theorem}[section]
\newtheorem{proposition}[theorem]{Proposition}
\newtheorem{lemma}[theorem]{Lemma}
\theoremstyle{definition}
\newtheorem{problem}{Problem}
\numberwithin{equation}{section}
\renewcommand*\subjclass[2][2010]{\def\@subjclass{#2}\@ifundefined{subjclassname@#1}{\ClassWarning{\@classname}{Unknown edition (#1) of Mathematics Subject Classification; using '2010'.}}{\@xp\let\@xp\subjclassname\csname subjclassname@#1\endcsname}}
\renewcommand{\subjclassname}{\textup{2010} Mathematics Subject Classification}
\begin{document}

\title{Varieties of monoids with a distributive subvariety lattice}
\thanks{Supported by the Ministry of Science and Higher Education of the Russian Federation (project FEUZ-2023-0022).}

\author{Sergey V. Gusev}

\address{Ural Federal University, Institute of Natural Sciences and Mathematics, Lenina 51, Ekaterinburg 620000, Russia}

\email{sergey.gusb@gmail.com}

\begin{abstract}
A monoid is aperiodic if all its subgroups are trivial.
We completely classify all varieties of aperiodic monoids whose subvariety lattice is distributive.
\end{abstract}

\keywords{Monoid, aperiodic monoid, variety, subvariety lattice, distributive lattice.}

\subjclass{20M07 (Primary), 08B15 (Secondary)}

\maketitle

\section{Background and overview}
\label{Sec: introduction}

A \textit{variety} is a class of universal algebras of a fixed type that is closed under the formation of homomorphic images, subalgebras and arbitrary direct products.
A variety $\mathbf V$ is \textit{distributive} if its lattice $\mathfrak L(\mathbf V)$ of subvarieties is distributive.

There was genuine interest to investigate distributive varieties of groups. 
The first example of a non-distributive variety of groups was discovered by Higman~\cite{Higman-67} in the mid-1960s; see also Neumann~\cite[54.24]{Neumann-67}.
At the turn of the 1960s and 1970s, there were a lot of articles on this topic; see Cossey~\cite{Cossey-69}, Kov\'acs and Newman~\cite{Kovacs-Newman-71} and Roman'kov~\cite{Romankov-70}, for instance. 
However, over time, the activity began to fade. 
The general problem of describing distributive varieties of groups turned out to be highly infeasible.
Here it suffices to refer to the following result by Kozhevmikov~\cite{Kozhevnikov-12}: there exist continuum many varieties of groups whose lattice of subvarieties is isomorphic to the 3-element chain.

In 1979, Shevrin~\cite[Problem~2.60a]{sverdlovsk-tetrad} posed the problem of classifying distributive varieties of semigroups. 
In the early 1990s, in a series of papers, Volkov solved this problem in a very wide partial case.
In particular, Volkov completely classified all distributive varieties of \textit{aperiodic} semigroups, i.e., semigroups all whose subgroups are trivial.
In view of the above-mentioned result by Kozhevnikov~\cite{Kozhevnikov-12}, a classification of distributive varieties is hardly possible in general.
See the survey article by Shevrin et al~\cite[Section~11]{Shevrin-Vernikov-Volkov-09} for more details.

The present article is concerned with the distributive varieties of \textit{monoids}, i.e., semigroups with an identity element.
Even though monoids are very similar to semigroups, the story turns out to be very different and difficult.
Until recently, distributive varieties of monoids have not been studied systematically, although non-trivial examples of such varieties have long been known; see Head~\cite{Head-68} and Wismath~\cite{Wismath-86}.
Over the past years, the number of accumulated examples has increased significantly; see Gusev and O.Sapir~\cite{Gusev-Sapir-22}, Jackson~\cite{Jackson-05}, Jackson and Lee~\cite{Jackson-Lee-18}, Lee~\cite{Lee-12,Lee-14,Lee-23}, and Zhang and Luo~\cite{Zhang-Luo-19}, for instance. 
This has paved the way for the systematic study of distributive varieties of monoids.
More information and many references can be found in the recent survey~\cite[Section~6.3]{Gusev-Lee-Vernikov-22}.

As in the semigroup case, in view of the result by Kozhevnikov~\cite{Kozhevnikov-12}, one can hardly hope to describe distributive varieties of monoids in general.
Therefore, first of all, it is logical to focus specifically on the class of aperiodic monoids, in which, at least hypothetically, one can hope for an exhaustive description of distributive varieties.

\begin{problem}[\mdseries{\!\cite[Problem~6.10]{Gusev-Lee-Vernikov-22}}]
\label{Problem: dis}
Describe distributive varieties of aperiodic monoids.
\end{problem}

The article~\cite{Gusev-Vernikov-18} describing varieties of aperiodic monoids whose subvariety lattice is a chain can be considered as the first attempt to take the first step in solving Problem~\ref{Problem: dis}, since being a chain is a stronger property than satisfying
the distributive law.
However, the systematic study of just distributive varieties of aperiodic monoids is started by the author in~\cite{Gusev-23}, where distributive varieties are described within the class of aperiodic monoids with central idempotents.
In the follow-up paper~\cite{Gusev-24}, this result is extended to the class of aperiodic monoids with commuting idempotents. 
The present article, the third and the last work of the cycle, completes the story.
We completely classify all distributive varieties of aperiodic monoids, solving Problem~\ref{Problem: dis}.

We provide an equational description of such varieties. 
Namely, we present 5 countably infinite series of varieties and 29 ``sporadic'' varieties such that every distributive variety of aperiodic monoids is contained in one of them. 
Notice that the proof this result implies that set of all distributive varieties of aperiodic monoids is countably infinite, although the set of all distributive varieties of monoids is uncountably infinite~\cite{Kozhevnikov-12}.

This paper is structured as follows.
In Section~\ref{Sec: main result}, we formulate and discuss our main result.
Some background results are first given in Section~\ref{Sec: preliminaries}.   
In Section~\ref{Sec: exclusion identities}, exclusion identities are found for a number of certain varieties of monoids.
Section~\ref{Sec: non-distributive} contains a series of examples of non-distributive varieties of monoids.
Finally, Section~\ref{Sec: proof} is devoted to the proof of our main result.

\section{Our main result}
\label{Sec: main result}

Let us briefly recall a few notions that we need to formulate our main result.
Let $\mathfrak X$ be a countably infinite set called an \textit{alphabet}. 
As usual, let~$\mathfrak X^\ast$ denote the free monoid over the alphabet~$\mathfrak X$. 
Elements of~$\mathfrak X$ are called \textit{letters} and elements of~$\mathfrak X^\ast$ are called \textit{words}.
We treat the identity element of~$\mathfrak X^\ast$ as \textit{the empty word}, which is denoted by~$1$.  
Words and letters are denoted by small Latin letters. 
However, words unlike letters are written in bold. 
An identity is written as $\mathbf u \approx \mathbf v$, where $\mathbf u,\mathbf v \in \mathfrak X^\ast$; it is \textit{non-trivial} if $\mathbf u \ne \mathbf v$.

As usual, $\mathbb N$ denote the set of all natural numbers. 
For any $n\in\mathbb N$, we denote by $S_n$ the full symmetric group on the set $\{1,\dots,n\}$.  
For any $n,m,k\in\mathbb N$, $\rho\in S_{n+m}$ and $\tau\in S_{n+m+k}$, we define the words:
\[
\begin{aligned}
\mathbf a_{n,m}[\rho]&:=\biggl(\prod_{i=1}^n z_it_i\biggr)x\biggl(\prod_{i=1}^{n+m-1} z_{i\rho}y_i^2\biggr)z_{(n+m)\rho}x\biggl(\prod_{i=n+1}^{n+m} t_iz_i\biggr),\\
\mathbf a_{n,m}^\prime[\rho]&:=\biggl(\prod_{i=1}^n z_it_i\biggr)\biggl(\prod_{i=1}^{n+m-1} z_{i\rho}y_i^2\biggr)z_{(n+m)\rho}x^2\biggl(\prod_{i=n+1}^{n+m} t_iz_i\biggr),\\
\overline{\mathbf a}_{n,m}[\rho]&:=\biggl(\prod_{i=1}^n z_it_i\biggr)x\biggl(\prod_{i=1}^{n+m-1} z_{i\rho}y_i^2x\biggr)z_{(n+m)\rho}x\biggl(\prod_{i=n+1}^{n+m} t_iz_i\biggr),\\
\mathbf c_{n,m,k}[\tau]&:=\biggl(\prod_{i=1}^n z_it_i\biggr)xyt\biggl(\prod_{i=n+1}^{n+m} z_it_i\biggr)x\biggl(\prod_{i=1}^{n+m+k-1} z_{i\tau}y_i^2\biggr)z_{(n+m+k)\tau}y\biggl(\prod_{i=n+m+1}^{n+m+k} t_iz_i\biggr).
\end{aligned}
\]
Let $\mathbf c_{n,m,k}^\prime[\tau]$ denote the word obtained from $\mathbf c_{n,m,k}[\tau]$ by swapping the first occurrences of $x$ and $y$.
We denote also by $\mathbf d_{n,m,k}[\tau]$ and $\mathbf d_{n,m,k}^\prime[\tau]$ the words obtained from the words $\mathbf c_{n,m,k}[\tau]$ and $\mathbf c_{n,m,k}^\prime[\tau]$, respectively, when reading the last words from right to left.
We fix notation for the following three identities:
\begin{align*}
\sigma_1:\enskip xyzxty\approx yxzxty,\ \ \ \ 
\sigma_2:\enskip xzytxy\approx xzytyx,\ \ \ \ 
\sigma_3:\enskip xzxyty\approx xzyxty.
\end{align*}
Let
\[
\begin{aligned}
&\Phi:=\{x^2\approx x^3,\,x^2y^2\approx y^2x^2\},\\
&\Phi_1:=\left\{
\mathbf c_{k,\ell,m}[\rho]\approx\mathbf c_{k,\ell,m}^\prime[\rho],\,\mathbf d_{k,\ell,m}[\rho]\approx\mathbf d_{k,\ell,m}^\prime[\rho]
\mid
k,\ell,m\in\mathbb N,\,
\rho\in S_{k+\ell+m}
\right\},\\
&\Phi_2:=\left\{
\mathbf a_{k,\ell}[\rho] \approx \overline{\mathbf a}_{k,\ell}[\rho]
\mid
k,\ell\in\mathbb N,\,
\rho\in S_{k+\ell}
\right\},\\
&\Phi_3:=\left\{
\mathbf a_{k,\ell}[\rho] \approx \mathbf a_{k,\ell}^\prime[\rho]
\mid
k,\ell\in\mathbb N,\,
\rho\in S_{k+\ell}
\right\}.
\end{aligned}
\]
Let $\var\,\Sigma$ denote the monoid variety given by a set $\Sigma$ of identities. 
If $\mathbf V$ is a monoid variety, then $\mathbf V^\delta$ denotes the variety \textit{dual} to $\mathbf V$, i.e., the variety consisting of monoids anti-isomorphic to monoids from $\mathbf V$.

Our main result is the following

\begin{theorem}
\label{T: main}
A variety of aperiodic monoids is distributive if and only if it is contained in one of the varieties 
\[
\begin{aligned}
&\mathbf B:=\var
\left\{
x\approx x^2 \right\},
\\
&\mathbf D_1:=\var
\left\{
\Phi,\,xyx\approx xyx^2,\,x^2y\approx x^2yx \right\},
\\
&\mathbf D_2:=\var
\left\{
\Phi,\,\Phi_1,\,\Phi_2,\,xyx\approx xyx^2 \right\},
\\
&\mathbf D_3:=\var
\left\{
\Phi,\,\sigma_2,\,\sigma_3,\,x^2y\approx x^2yx\approx xyx^2,\,xyxzx\approx xyxzx^2
\right\}
,\\
&\mathbf D_4:=\var
\left\{
\Phi,\,\sigma_2,\,\sigma_3,\,x^2y\approx x^2yx,\,x^2yty\approx yx^2ty,\,xyzx^2ty^2\approx yxzx^2ty^2,\,xyxzx\approx xyx^2zx
\right\}
,\\
&\mathbf D_5:=\var
\left\{
\Phi,\,\sigma_2,\,\sigma_3,\,xyx^2\approx x^2yx,\,xyzx^2y\approx yxzx^2y,\,xyxzx\approx xyxzx^2
\right\}
,\\
&\mathbf D_6:=\var
\left\{
\Phi,\,\sigma_2,\,\sigma_3,\,x^2yx\approx x^2yx^2,\,x^2yty\approx yx^2ty,\,xyzx^2ty\approx yxzx^2ty,\,xyzytx^2\approx yxzytx^2
\right\}
,\\
&\mathbf D_7:=\var
\left\{\!\!\!\!
\begin{array}{l}
\Phi,\,\sigma_2,\,\sigma_3,\,x^2yx\approx x^2yx^2,\,x^2yty\approx yx^2ty,\,xyzx^2ty^2\approx yxzx^2ty^2,\\
xyzx^2y\approx yxzx^2y,\,xyxzx\approx xyx^2zx
\end{array}
\!\!\!\!\right\}
,\\
&\mathbf D_8:=\var
\left\{\!\!\!\!
\begin{array}{l}
\Phi,\,\sigma_2,\,\sigma_3,\,x^2yx\approx x^2yx^2,\,x^2yty\approx yx^2ty,\,xyzx^2ty^2\approx yxzx^2ty^2,\\
xyzx^2y\approx yxzx^2y,\,xyzx^2tysx\approx yxzx^2tysx,\,xyxzx\approx xyxzx^2
\end{array}
\!\!\!\!\right\}
,\\
&\mathbf D_9:=\var
\left\{
\Phi,\,\sigma_1,\,\sigma_3,\,x^2yx\approx x^2yx^2,\, ytx^2y\approx ytyx^2
\right\}
,\\
&\mathbf D_{10}:=\var
\left\{
\Phi,\,\sigma_1,\,\sigma_3,\,x^2yx\approx x^2yx^2,\, xyxzx\approx xyxzx^2,\,x^2zytxy\approx x^2zytyx
\right\}
,\\
&\mathbf D_{11}:=\var
\left\{
\Phi,\,\Phi_1,\,\Phi_2,\,xyx^2\approx x^2yx,\, xyxzx\approx xyxzx^2
\right\}
,\\
&\mathbf D_{12}:=\var
\left\{\!\!\!\!
\begin{array}{l}
\Phi,\,\Phi_1,\,\Phi_2,\,x^2yx\approx x^2yx^2,\,ytx^2y\approx ytyx^2,\, x^2yty\approx yx^2ty,\\
xyzx^2ty\approx yxzx^2ty,\, xyzytx^2\approx yxzytx^2,\, yzyxtx^2\approx yzxytx^2
\end{array}
\!\!\!\!\right\}
,\\
&\mathbf D_{13}:=\var
\left\{\!\!\!\!
\begin{array}{l}
\Phi,\,\Phi_1,\,\Phi_2,\,x^2yx\approx x^2yx^2,\,yx^2ty\approx xyx^2ty,\,ytyx^2\approx ytxyx^2,\,xyxzx\approx xyx^2zx\\
x^2yty^2\approx yx^2ty^2,\,x^2yzytx\approx yx^2zytx,\,x^2yzxty\approx yx^2zxty,\,x^2zytxy\approx x^2zytyx,\\ 
yzx^2ytx\approx yzyx^2tx,\,xyzx^2ty^2\approx yxzx^2ty^2
\end{array}
\!\!\!\!\right\}
,\\
&\mathbf D_{14}:=\var
\left\{\!\!\!\!
\begin{array}{l}
\Phi,\,\Phi_1,\,\Phi_2,\,x^2yx\approx x^2yx^2,\,yx^2ty\approx xyx^2ty,\,ytyx^2\approx ytxyx^2,\,xyxzx\approx xyxzx^2\\
x^2yty^2\approx yx^2ty^2,\,x^2yzytx\approx yx^2zytx,\,x^2yzxty\approx yx^2zxty,\,x^2zytxy\approx x^2zytyx,\\ 
yzx^2ytx\approx yzyx^2tx,\,xyzx^2ty^2\approx yxzx^2ty^2,\,xyzx^2tysx\approx yxzx^2tysx
\end{array}
\!\!\!\!\right\}
,\\
&\mathbf D_{15}:=\var
\left\{
\Phi_1,\,xyx\approx x^2yx\approx xyx^2,\,(xy)^2\approx (yx)^2
\right\}
,
\end{aligned}
\]
\[
\begin{aligned}
&\mathbf P_n:=\var
\left\{
\Phi_1,\,\Phi_3,\,x^n\approx x^{n+1},\,x^2y\approx yx^2
\right\}
,\\
&\mathbf Q_n:=\var\left\{x^n\approx x^{n+1},\,x^ny\approx yx^n,\,x^2y\approx xyx\right\},\\
&\mathbf R_n:=\var\left\{\sigma_1,\,\sigma_3,\,x^n\approx x^{n+1},\,x^2y\approx yx^2\right\},\ \ n \in \mathbb N,
\end{aligned}
\]
or the dual ones.
\end{theorem}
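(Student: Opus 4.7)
The plan is to establish both implications via a careful case analysis driven by the machinery of Sections~\ref{Sec: preliminaries}--\ref{Sec: non-distributive}, together with the two earlier papers~\cite{Gusev-23, Gusev-24} of this cycle.

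For the ``if'' direction, each of the varieties $\mathbf D_1,\dots,\mathbf D_{15}$, $\mathbf P_n$, $\mathbf Q_n$, $\mathbf R_n$ and their duals must be shown to be distributive. A substantial portion of the list consists of varieties with commuting idempotents: for such varieties I would invoke the classification in~\cite{Gusev-24} and check that each one is either explicitly listed there or is a subvariety of something that is. For the remaining varieties---those without commuting idempotents, notably $\mathbf D_4$ through $\mathbf D_{14}$ together with some members of the three infinite series---one must verify distributivity directly, by identifying the join-irreducible subvarieties from the equational bases and checking the distributive law on triples.

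For the ``only if'' direction, let $\mathbf V$ be a distributive variety of aperiodic monoids. The first dichotomy is whether the idempotents of $\mathbf V$ commute. If they do, then~\cite{Gusev-24} immediately yields containment in one of the listed varieties, so one may assume they do not. The core argument then runs as follows: Section~\ref{Sec: non-distributive} exhibits an explicit list of non-distributive aperiodic monoid varieties, none of which $\mathbf V$ may contain as a subvariety; the exclusion identities of Section~\ref{Sec: exclusion identities} translate each such avoidance into a specific identity that $\mathbf V$ must satisfy. A branching case analysis, organized by which of the pivot identities $\sigma_1$, $\sigma_2$, $\sigma_3$, the identity sets $\Phi$, $\Phi_1$, $\Phi_2$, $\Phi_3$, and the various $xyx$- and $x^2y$-type identities hold in $\mathbf V$, then forces $\mathbf V$ into the equational theory of exactly one of the listed varieties, up to duality.

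The main obstacle will be the sheer combinatorial complexity of the case analysis. The varieties $\mathbf D_3$ through $\mathbf D_{14}$ differ from each other only through subtle \emph{partial commutation} identities such as $x^2yty\approx yx^2ty$, $xyzx^2ty\approx yxzx^2ty$, and $xyxzx\approx xyx^2zx$, while the infinite series $\mathbf P_n$, $\mathbf Q_n$, $\mathbf R_n$ add a further dimension parametrized by the index of nilpotence; correctly matching each branch of the case tree to a \emph{single} variety, and confirming that no distributive possibility has been overlooked, will require carefully chosen critical words and meticulous bookkeeping of which identities each $\mathbf D_i$ satisfies versus which it refutes. The exclusion-identity machinery of Section~\ref{Sec: exclusion identities} is therefore the technical heart of the argument: organizing its outputs into a decision tree whose leaves are precisely the enumerated varieties is the principal task.
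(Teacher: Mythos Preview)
Your plan rests on a misidentification of which varieties in the list have commuting idempotents. Every one of $\mathbf D_1,\dots,\mathbf D_{14}$ satisfies $\Phi$, which contains $x^2y^2\approx y^2x^2$; likewise $\mathbf P_n$, $\mathbf R_n$ satisfy $x^2y\approx yx^2$ and $\mathbf Q_n$ satisfies $x^ny\approx yx^n$. Hence \emph{all} of them are varieties of aperiodic monoids with commuting idempotents and are covered verbatim by Proposition~\ref{P: Acom} (that is, by~\cite{Gusev-24}). The only genuinely new variety in the present paper, apart from $\mathbf B$, is $\mathbf D_{15}$. So your sufficiency plan---directly verifying distributivity for ``$\mathbf D_4$ through $\mathbf D_{14}$ together with some members of the three infinite series'' by enumerating join-irreducibles---is both unnecessary and misdirected: those cases are already done, and the real task is $\mathbf D_{15}$ alone. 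The paper handles $\mathbf D_{15}$ not by checking triples but by exhibiting a canonical set $\Gamma$ of identities (those of the form~\eqref{two letters in a block}) such that every variety in $[\mathbf M_\gamma(x^+yzx^+)\vee\mathbf M_\gamma(xx^+yy^+),\mathbf D_{15}]$ is defined within $\mathbf D_{15}$ by a subset of $\Gamma$, and such that whenever $\mathbf X\wedge\mathbf Y$ satisfies an identity of $\Gamma$ then one of $\mathbf X,\mathbf Y$ already does; distributivity then follows from~\cite[Lemma~2.19]{Gusev-Vernikov-21}.

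The same misreading distorts your necessity plan. There is no ``branching case analysis organized by $\sigma_1,\sigma_2,\sigma_3,\Phi,\Phi_1,\Phi_2,\Phi_3$'' to be done here: that entire tree lives inside the commuting-idempotent world and was carried out in~\cite{Gusev-24}. The dichotomy in the present proof is whether $M_\gamma(xx^+yy^+)$ lies in $\mathbf V$. If it does not, Lemma~\ref{L: nsub M(xx^+yy^+)} forces $x^ny^n\approx y^nx^n$, idempotents commute, and Proposition~\ref{P: Acom} finishes. If it does, then the non-modularity results of Section~\ref{Sec: non-distributive} (Propositions~\ref{P: non-mod M(xyx) vee M(xx^+yy^+)} and~\ref{P: non-mod L(M([c_k]^gamma))}) together with the exclusion identities (notably Lemma~\ref{L: nsub M(c_p),M(c_p')}) and a few external results force $\mathbf V\subseteq\mathbf D_{15}$ directly---no further branching into $\mathbf D_3,\dots,\mathbf D_{14}$ occurs. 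Your proposal therefore misallocates essentially all of the effort.
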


A variety $\mathbf V$ is \textit{self-dual} if $\mathbf V=\mathbf V^\delta$.
Notice that only the varieties $\mathbf B$, $\mathbf D_{11}$, $\mathbf D_{15}$ and $\mathbf P_n$ are self-dual among the varieties listed in Theorem~\ref{T: main}.
Thus, every distributive variety of aperiodic monoids is contained in either a variety in 5 countably infinite series or 29 ``sporadic'' varieties. 

In~\cite{Green-51}, Green introduces five equivalence relations on a semigroup. These relations are collectively referred to as Green's relations, and play a fundamental role in studying semigroups.
Recall that two elements of a monoid $M$ are in the Green's relation $\mathscr J$ if they generate the same principal ideal in $M$.
A monoid $M$ is $\mathscr J$-\textit{trivial} if the Green's relation $\mathscr J$ is the equality relation in $M$.
It is well known and can be easily verified that any variety of $\mathscr J$-trivial monoids satisfies the identities $x^n\approx x^{n+1}$ and $(xy)^n\approx (yx)^n$ for some $n\in\mathbb N$; see Proposition~4.4 in~\cite{Pin-86} and its proof, for instance.
It is easy to see that all the varieties listed in Theorem~\ref{T: main} except $\mathbf B$ satisfy these identities.
The variety $\mathbf B$ is noting but the variety of all band monoids.
Thus, every distributive variety of aperiodic monoids is either a variety of bands or a variety of $\mathscr J$-trivial monoids. 

A variety is \textit{locally finite} if every finitely generated member in $\mathbf V$ is finite.
It follows from the fundamental result of M.Sapir~\cite[Theorem~P]{Sapir-87} that all the varieties listed in Theorem~\ref{T: main} are locally finite. 
Hence every distributive variety of aperiodic monoids is so.
In contrast, there is a non-locally finite distributive variety of groups~\cite{Kozhevnikov-12}.

In view of the result of Wismath~\cite{Wismath-86}, the lattice $\mathfrak L(\mathbf B)$ is countably infinite.
As noted in~\cite{Gusev-24}, if $\mathbf X$ is one of the varieties $\mathbf D_1$--$\mathbf D_{14}$, $\mathbf P_n$, $\mathbf Q_n$ or $\mathbf R_n$, then each subvariety of $\mathbf X$ may be given within $\mathbf X$ by a finite set of identities.
Finally, it follows from the proof of Theorem~\ref{T: main} in Section~\ref{Sec: proof} that the variety $\mathbf D_{15}$ contains countably many subvarieties.
Consequently, the set of all distributive varieties of aperiodic monoids is countably infinite.

\section{Preliminaries}
\label{Sec: preliminaries}

\subsection{Distributive varieties of aperiodic monoids with commuting idempotents}
\label{Subsec: Acom}

First, we remind the main result of~\cite{Gusev-24}.

\begin{proposition}[\mdseries{\!\cite[Theorem~1.1]{Gusev-24}}]
\label{P: Acom}
A variety of aperiodic monoids with commuting idempotents is distributive if and only if it is contained in one of the varieties $\mathbf D_1$--$\mathbf D_{14}$, $\mathbf P_n$, $\mathbf Q_n$, $\mathbf R_n$, $n \in \mathbb N$, or the dual ones.\qed
\end{proposition}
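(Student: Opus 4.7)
The plan is to establish sufficiency (each listed variety is distributive, hence all its subvarieties inherit distributivity) and necessity (every distributive variety of aperiodic monoids with commuting idempotents lies inside the list) separately, and to treat necessity as the main engine of the argument.

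For necessity, I would bootstrap from the classification for aperiodic monoids with central idempotents in~\cite{Gusev-23}. The strategy is to exhibit a finite family of \emph{critical} non-distributive varieties of aperiodic monoids with commuting idempotents — small examples whose subvariety lattice contains a copy of $M_3$ or $N_5$ — and, for each critical variety $\mathbf N$, to prove an \emph{exclusion identity} characterising the failure $\mathbf V \not\supseteq \mathbf N$ by a concrete word-identity. This is exactly the role I would expect for the identities collected in $\Phi$, $\Phi_1$, $\Phi_2$, $\Phi_3$, $\sigma_1$--$\sigma_3$, and the various short identities such as $x^2yx\approx x^2yx^2$. A distributive variety $\mathbf V$ must satisfy every such exclusion identity. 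One then case-splits on which identities from a chosen ``short'' list hold in $\mathbf V$, and shows that each combination of these choices, combined with the exclusion identities, forces the full defining set of one of $\mathbf D_1$--$\mathbf D_{14}$, $\mathbf P_n$, $\mathbf Q_n$, $\mathbf R_n$, or a dual. Duality allows each pair $\{\mathbf V,\mathbf V^\delta\}$ to be handled in a single branch.

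For sufficiency, I would treat each candidate variety separately. In every case the defining identities impose a tight normal form on words, and since M.~Sapir's theorem~\cite{Sapir-87} guarantees local finiteness, distributivity reduces to an analysis of the finite free objects and their fully invariant congruences. For the three infinite families $\mathbf P_n,\mathbf Q_n,\mathbf R_n$, an inductive description of the subvariety lattices (whose layers differ only in the index $n$) should give distributivity uniformly.

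The main obstacle I anticipate is the verification of the exclusion identities corresponding to $\Phi_1$ and $\Phi_2$: these are indexed by arbitrary permutations $\rho\in S_{k+\ell+m}$, so they cannot be settled by a finite check. The natural approach is an induction on $k+\ell+m$ in which the inductive step combines a careful analysis of the first- and last-occurrence structure of each letter with the hypothesis that some particular critical non-distributive variety is absent from $\mathbf V$. A secondary difficulty is keeping the necessity case analysis tractable; I expect this to require isolating a small set of ``generating'' short identities whose truth-values already determine those of all others, so that the decision tree collapses to a manageable size.
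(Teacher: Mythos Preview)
This proposition is not proved in the present paper at all: it is the main result of the companion article~\cite{Gusev-24}, and the paper simply imports it as a black box (note the terminal \qed with no argument, and the surrounding text ``we remind the main result of~\cite{Gusev-24}''). So there is no ``paper's own proof'' to compare your proposal against; your sketch is, in effect, a conjectural outline of how~\cite{Gusev-24} might proceed.

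That said, your plan is broadly plausible as a high-level strategy for a result of this type, and it matches the flavour of the arguments one sees elsewhere in the present paper (exclusion identities for critical non-distributive subvarieties, case analysis on short identities, duality to halve the work). But it is only a plan: the substantive content of such a classification lies entirely in identifying the \emph{right} critical varieties and proving the \emph{right} exclusion lemmas, and your proposal does not name any of them. Without those specifics there is no way to assess whether your approach would actually close, and in particular your treatment of sufficiency (``tight normal form on words'' plus local finiteness) is too vague to count as a method---local finiteness by itself does not reduce distributivity of the full subvariety lattice to anything finite. If you intend to reconstruct the proof, you will need to consult~\cite{Gusev-24} directly.
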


\subsection{Words, identities, deduction}

The \textit{alphabet} of a word $\mathbf w$, i.e., the set of all letters occurring in $\mathbf w$, is denoted by $\alf(\mathbf w)$. 
For a word $\mathbf w$ and a letter $x$, let $\occ_x(\mathbf w)$ denote the number of occurrences of $x$ in $\mathbf w$.
A letter $x$ is called \textit{simple} [\textit{multiple}] \textit{in a word} $\mathbf w$ if $\occ_x(\mathbf w)=1$ [respectively, $\occ_x(\mathbf w)>1$]. 
The set of all simple [multiple] letters in a word $\mathbf w$ is denoted by $\simple(\mathbf w)$ [respectively $\mul(\mathbf w)$]. 
If $\mathbf w$ is a word and $x_1,\dots,x_k\in\alf(\mathbf w)$, then we denote by $\mathbf w(x_1,\dots,x_k)$ the word obtained from $\mathbf w$ by removing all occurrences of letters from $\alf(\mathbf w)\setminus\{x_1,\dots,x_k\}$. 

A variety $\mathbf V$ \textit{satisfies} an identity $\mathbf u \approx \mathbf v$, if for any monoid $M\in \mathbf V$ and any substitution $\phi\colon \mathfrak X \to M$, the equality $\phi(\mathbf u)=\phi(\mathbf v)$ holds in $M$.
An identity $\mathbf u \approx \mathbf v$ is \textit{directly deducible} from an identity $\mathbf s \approx \mathbf t$ if there exist some words $\mathbf a,\mathbf b \in \mathfrak X^\ast$ and substitution $\phi\colon \mathfrak X \to \mathfrak X^\ast$ such that $\{ \mathbf u, \mathbf v \} = \{ \mathbf a\phi(\mathbf s)\mathbf b,\mathbf a\phi(\mathbf t)\mathbf b \}$.
A non-trivial identity $\mathbf u \approx \mathbf v$ is \textit{deducible} from a set $\Sigma$ of identities if there exists some finite sequence $\mathbf u = \mathbf w_0, \dots, \mathbf w_m = \mathbf v$ of words such that each identity $\mathbf w_i \approx \mathbf w_{i+1}$ is directly deducible from some identity in $\Sigma$.

\begin{proposition}[Birkhoff's Completeness Theorem for Equational Logic; see {\cite[Theorem~1.4.6]{Almeida-94}}]
\label{P: deduction}
A variety $\var\,\Sigma$ satisfies an identity $\mathbf u \approx \mathbf v$ if and only if $\mathbf u \approx \mathbf v$ is deducible from $\Sigma$.\qed
\end{proposition}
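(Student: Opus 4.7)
The plan is to prove both directions of the stated equivalence; soundness is essentially a bookkeeping exercise from the definitions, while completeness requires constructing a suitable witness algebra.

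For soundness, I would argue by induction on the length $m$ of a deduction sequence $\mathbf u = \mathbf w_0, \mathbf w_1, \ldots, \mathbf w_m = \mathbf v$ that every monoid $M \in \var\,\Sigma$ satisfies $\mathbf w_0 \approx \mathbf w_m$. The base case $m = 0$ is trivial. For the inductive step, if $\mathbf w_i \approx \mathbf w_{i+1}$ is directly deducible from some $\mathbf s \approx \mathbf t \in \Sigma$ via a context $(\mathbf a, \mathbf b)$ and substitution $\phi$, then for any evaluation $\psi \colon \mathfrak X \to M$ the composite $\psi \circ \phi$ is again a substitution, so $\psi(\phi(\mathbf s)) = \psi(\phi(\mathbf t))$ because $M$ satisfies $\mathbf s \approx \mathbf t$; multiplying by $\psi(\mathbf a)$ on the left and $\psi(\mathbf b)$ on the right yields $\psi(\mathbf w_i) = \psi(\mathbf w_{i+1})$, and the induction closes.

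For completeness, the strategy is to build the relatively free monoid of $\var\,\Sigma$ directly from the deduction relation. Define a binary relation $\equiv$ on $\mathfrak X^\ast$ by $\mathbf u \equiv \mathbf v$ iff $\mathbf u = \mathbf v$ or $\mathbf u \approx \mathbf v$ is deducible from $\Sigma$. I would verify in order that $\equiv$ is (i) an equivalence relation (symmetry by reversing the deduction sequence, transitivity by concatenation), (ii) a congruence on $\mathfrak X^\ast$ (left and right translation preserves direct deducibility by absorbing the outer words into the context), and (iii) fully invariant, i.e., closed under every substitution of letters by words. Granted these, the quotient $F := \mathfrak X^\ast/{\equiv}$ is a monoid that satisfies every identity in $\Sigma$: since $\mathbf s \equiv \mathbf t$ for each $\mathbf s \approx \mathbf t \in \Sigma$ and $\equiv$ is fully invariant, every substituted instance $\phi(\mathbf s) \equiv \phi(\mathbf t)$ holds. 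Hence $F \in \var\,\Sigma$. If $\var\,\Sigma$ satisfies $\mathbf u \approx \mathbf v$, then $F$ does too, and evaluating under the canonical map $x \mapsto \overline{x}$ forces $\overline{\mathbf u} = \overline{\mathbf v}$, which gives $\mathbf u \equiv \mathbf v$, i.e., deducibility whenever $\mathbf u \ne \mathbf v$.

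The main obstacle is step (iii), the full invariance of $\equiv$. Concretely, one must check that if $\mathbf w_i \approx \mathbf w_{i+1}$ is directly deducible from $\mathbf s \approx \mathbf t$ via a context $(\mathbf a, \mathbf b)$ and substitution $\phi$, and $\chi$ is an arbitrary further substitution, then $\chi(\mathbf w_i) \approx \chi(\mathbf w_{i+1})$ is directly deducible from the same $\mathbf s \approx \mathbf t$, now via the context $(\chi(\mathbf a), \chi(\mathbf b))$ and the composed substitution $\chi \circ \phi$. This is the critical piece of syntactic bookkeeping linking the purely formal notion of direct deducibility to the semantic closure properties that define a variety; once it is in hand, the rest of the argument assembles without further difficulty.
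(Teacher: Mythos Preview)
Your proof is correct and is the standard textbook argument for Birkhoff's completeness theorem. Note, however, that the paper does not actually prove this proposition: the \texttt{\textbackslash qed} immediately after the statement signals that it is quoted as a known result from Almeida~\cite{Almeida-94} without proof. So there is nothing to compare against beyond observing that your sketch reproduces the classical construction of the relatively free monoid $\mathfrak X^\ast/{\equiv}$ and the verification that deducibility is a fully invariant congruence, which is exactly what one finds in the cited reference.
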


\subsection{Rees quotient monoids}
\label{Subsec: Rees quotient monoids}

The following construction was introduced by Perkins~\cite{Perkins-69} to build the first example of a finite semigroup generating non-finitely based variety.
We use $W^\le$ to denote the closure of a set of words $W\subset\mathfrak X^\ast$ under taking factors. 
For any set $W$ of words, let $M(W)$ denote the Rees quotient monoid of $\mathfrak X^\ast$ over the ideal $\mathfrak X^\ast \setminus W^{\le}$ consisting of all words that are not factors of any word in $W$.

Given a variety $\mathbf V$, a word $\mathbf u$ is called an \emph{isoterm for} $\mathbf V$ if the only word $\mathbf v$ such that $\mathbf V$ satisfies the identity $\mathbf u \approx \mathbf v$ is the word $\mathbf u$ itself.

\begin{lemma}[\mdseries{\!\cite[Lemma~3.3]{Jackson-05}}]
\label{L: M(W) in V}
Let $\mathbf V$ be a monoid variety and $W$ a set of words. 
Then $M(W)$ lies in $\mathbf V$ if and only if each word in $W$ is an isoterm for $\mathbf V$.\qed
\end{lemma}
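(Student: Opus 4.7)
The plan is to prove the two implications separately, with the reverse one carrying most of the technical load. For the forward direction, assuming $M(W)\in\mathbf V$, I would fix $\mathbf u\in W$ and any word $\mathbf v$ such that $\mathbf V$ satisfies $\mathbf u\approx\mathbf v$, and then evaluate both sides in $M(W)$ under the identity substitution $x\mapsto x$. The left side is $\mathbf u$, a non-zero element of $M(W)$ since $\mathbf u\in W\subseteq W^\le$, while the right side is $\mathbf v$ itself if $\mathbf v\in W^\le$ and is $0$ otherwise. Equality forces $\mathbf v=\mathbf u$ as words, so $\mathbf u$ is an isoterm for $\mathbf V$.

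For the reverse direction, assume every $\mathbf u\in W$ is an isoterm for $\mathbf V$. By Proposition~\ref{P: deduction} it suffices to show that $M(W)$ satisfies every identity $\mathbf s\approx\mathbf t$ of $\mathbf V$. I would fix such an identity and an arbitrary substitution $\phi\colon\mathfrak X\to M(W)$, and, by symmetry between $\mathbf s$ and $\mathbf t$, assume $\phi(\mathbf s)\neq 0$. The key construction is to lift $\phi$ to a true substitution $\bar\phi\colon\mathfrak X\to\mathfrak X^\ast$ by setting $\bar\phi(x):=\phi(x)$, viewed as a word, whenever $\phi(x)\neq 0$, and $\bar\phi(x):=\xi_x$ for a brand-new letter $\xi_x$ (not occurring in $\mathbf s$, $\mathbf t$, any word of $W$, or any other $\phi(y)$) whenever $\phi(x)=0$. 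The lift satisfies the dichotomy I need: for every word $\mathbf w$, either $\bar\phi(\mathbf w)\in W^\le$ and $\phi(\mathbf w)=\bar\phi(\mathbf w)$ in $M(W)$, or $\bar\phi(\mathbf w)\notin W^\le$ and $\phi(\mathbf w)=0$.

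Applying this to $\mathbf s$, the assumption $\phi(\mathbf s)\neq 0$ gives $\bar\phi(\mathbf s)\in W^\le$, so $\bar\phi(\mathbf s)$ is a factor of some $\mathbf u\in W$, say $\mathbf u=\mathbf a\bar\phi(\mathbf s)\mathbf b$. Since $\bar\phi(\mathbf s)\approx\bar\phi(\mathbf t)$ is a substitution instance of $\mathbf s\approx\mathbf t$, it holds in $\mathbf V$, and placing it in the context $\mathbf a\cdot{-}\cdot\mathbf b$ shows that $\mathbf V$ satisfies $\mathbf u\approx\mathbf a\bar\phi(\mathbf t)\mathbf b$. Isotermicity of $\mathbf u$ forces $\mathbf u=\mathbf a\bar\phi(\mathbf t)\mathbf b$, hence $\bar\phi(\mathbf t)=\bar\phi(\mathbf s)\in W^\le$, and the dichotomy delivers $\phi(\mathbf t)=\bar\phi(\mathbf t)=\bar\phi(\mathbf s)=\phi(\mathbf s)$, as required.

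The hard part will be the ``zero substitution'' issue in the reverse direction: if some letter of $\mathbf t$ that does not occur in $\mathbf s$ is sent to $0$ by $\phi$, then naively $\phi(\mathbf t)=0\neq\phi(\mathbf s)$ would block the argument. The fresh-letter device is precisely what defuses this obstacle, because a $\xi_x$ never occurs in any word of $W$ and therefore automatically places $\bar\phi(\mathbf w)$ outside $W^\le$ whenever $\mathbf w$ uses $x$. The ``inside or outside $W^\le$'' dichotomy for $\bar\phi$ thus matches exactly the ``nonzero word or zero'' dichotomy for $\phi$ in $M(W)$, and that alignment is what allows identities of $\mathbf V$ to be transported back to $M(W)$.
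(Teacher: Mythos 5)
The paper offers no proof of this lemma: it is imported verbatim from Jackson \cite[Lemma~3.3]{Jackson-05} and stated with a reference in place of an argument, so there is nothing internal to compare you against. Your proof is the standard one and is essentially correct in both directions: the forward implication via the natural substitution $x\mapsto x$ in $M(W)$, and the reverse implication via lifting $\phi\colon\mathfrak X\to M(W)$ to $\bar\phi\colon\mathfrak X\to\mathfrak X^\ast$, locating $\phi(\mathbf s)$ as a factor of some $\mathbf u\in W$, and invoking isotermicity of $\mathbf u$ together with cancellation in the free monoid.

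One step needs repair. You require each fresh letter $\xi_x$ to avoid \emph{every} word of $W$; but $W$ is an arbitrary, possibly infinite, set of words and may use every letter of the countable alphabet $\mathfrak X$ (e.g.\ $W=\mathfrak X$, viewed as one-letter words), in which case no such letter exists and your dichotomy is unavailable as stated. The fix is cheap, in either of two ways: (i) first fix the word $\mathbf u\in W$ having $\phi(\mathbf s)$ as a factor and choose the $\xi_x$ to avoid only the finitely many letters occurring in $\mathbf u$, $\mathbf s$, $\mathbf t$ and the words $\phi(y)$ --- this suffices, because the only place you use the dichotomy for $\mathbf t$ is after you have already shown $\bar\phi(\mathbf t)$ to be a factor of that particular $\mathbf u$; or (ii) replace the fresh letter by a fixed word $\mathbf z\notin W^{\le}$, noting that if $W^{\le}=\mathfrak X^\ast$ then every word is an isoterm, $\mathbf V$ is the variety of all monoids, and there is nothing to prove; since $W^{\le}$ is factor-closed, any $\bar\phi(\mathbf w)$ containing $\mathbf z$ lies outside $W^{\le}$. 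A final minor point: the reduction ``it suffices to show that $M(W)$ satisfies every identity of $\mathbf V$'' rests on Birkhoff's theorem that varieties are equational classes, not on the completeness theorem quoted as Proposition~\ref{P: deduction}; this is only a matter of attribution.
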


\subsection{Construction of O. Sapir}

In~\cite{Sapir-18,Sapir-21}, O. Sapir introduced a generalization of $M(W)$ construction.
This development of O. Sapir plays a critical role in the study of limit varieties of monoids~\cite{Gusev-Sapir-22,Sapir-21,Sapir-23}.
It is also a key tool in the present article.

Let $\alpha$ be a congruence on the free monoid $\mathfrak X^\ast$.  
The elements of the quotient monoid $\mathfrak X^\ast/\alpha$ are called $\alpha$-\textit{classes}. 
The factor relation on $\mathfrak X^\ast$ can be naturally extended to $\alpha$-classes as follows: given two $\alpha$-classes $\mathtt u, \mathtt v \in \mathfrak X^\ast/\alpha$  we write $\mathtt v \le_\alpha\mathtt u$ if $\mathtt u = \mathtt p\mathtt v\mathtt s$ for some $\mathtt p,\mathtt s \in \mathfrak X^\ast/\alpha$.
Given a set $\mathtt W$ of $\alpha$-classes, we define $\mathtt W^{\le_\alpha}$ as closure of $\mathtt W$ in quasi-order $\le_\alpha$.
For any set $\mathtt W$ of $\alpha$-classes, let $M_\alpha(\mathtt W)$ denote the Rees quotient of $\mathfrak X^\ast /\alpha$ over the ideal $(\mathfrak X^\ast/ \alpha) \setminus  \mathtt W^{\le_\alpha}$.
Evidently, if $\alpha$ is the trivial congruence on  $\mathfrak X^\ast$, then $M_\alpha(\mathtt W)$ is nothing but $M(\mathtt W)$.

In~\cite{Sapir-21}, O.Sapir introduced several certain congruences on $\mathfrak X^\ast$.
One of them, the congruence $\gamma$, plays a critical role in the present paper. 
It is defined as follows: for every $\mathbf u,\mathbf v \in \mathfrak X^\ast$, $\mathbf u\mathrel{\gamma}\mathbf v$ if and only if $\simple(\mathbf u)=\simple(\mathbf v)$ and $\mathbf u$ can be obtained from $\mathbf v$ by changing the individual exponents of letters.

A set $W$ of words is \textit{stable} with respect to a monoid variety $\mathbf V$ if for each $\mathbf u\in W$, we have $\mathbf v\in W$ whenever $\mathbf V$ satisfies $\mathbf u \approx \mathbf v$.
The following lemma is an analogue of Lemma~\ref{L: M(W) in V} for monoids of the form $M_\gamma(\mathtt W)$.

\begin{lemma}[\mdseries{\!\cite[Corollary~3.6]{Sapir-21}}]
\label{L: M_gamma(W) in V} 
Let $\mathbf V$ be a monoid variety and $\mathtt W$ a set of $\gamma$-classes.
Then $M_\gamma(\mathtt W)$ lies in $\mathbf V$ if and only if each $\gamma$-class in $\mathtt W$ is stable with respect to $\mathbf V$.\qed
\end{lemma}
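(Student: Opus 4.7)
The forward implication $(\Rightarrow)$ is a direct application of the canonical projection $\sigma\colon\mathfrak X^\ast\twoheadrightarrow M_\gamma(\mathtt W)$ sending each letter to its own image. Assuming $M_\gamma(\mathtt W)\in\mathbf V$, fix a $\gamma$-class $\mathtt u\in\mathtt W$ with a representative $\mathbf u\in\mathtt u$, and any $\mathbf v$ with $\mathbf V\models\mathbf u\approx\mathbf v$. Since $\mathtt u\in\mathtt W\subseteq\mathtt W^{\le_\gamma}$, the value $\sigma(\mathbf u)=\mathtt u$ is non-zero in $M_\gamma(\mathtt W)$, so $M_\gamma(\mathtt W)\models\mathbf u\approx\mathbf v$ forces $\sigma(\mathbf v)=\mathtt u$, i.e.\ $[\mathbf v]_\gamma=\mathtt u$, which means $\mathbf v\in\mathtt u$. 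Hence $\mathtt u$ is stable with respect to $\mathbf V$.

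For the backward direction $(\Leftarrow)$, assume each $\gamma$-class in $\mathtt W$ is stable. To show $M_\gamma(\mathtt W)\in\mathbf V$, fix an identity $\mathbf u\approx\mathbf v$ of $\mathbf V$ and an arbitrary substitution $\phi\colon\mathfrak X\to M_\gamma(\mathtt W)$, aiming at $\phi(\mathbf u)=\phi(\mathbf v)$. Lift $\phi$ to $\tilde\phi\colon\mathfrak X\to\mathfrak X^\ast$ with $\sigma\circ\tilde\phi=\phi$ on $\alf(\mathbf u)\cup\alf(\mathbf v)$; by substitution closure $\mathbf V\models\tilde\phi(\mathbf u)\approx\tilde\phi(\mathbf v)$, reducing the task to showing that $\sigma(\mathbf s)=\sigma(\mathbf t)$ whenever $\mathbf V\models\mathbf s\approx\mathbf t$. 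The case $\sigma(\mathbf s)=\sigma(\mathbf t)=0$ is trivial; in the asymmetric case, if $\sigma(\mathbf s)\neq 0$, pick $\mathbf p,\mathbf q$ with $[\mathbf p\mathbf s\mathbf q]_\gamma\in\mathtt W$ and apply stability to the consequence $\mathbf V\models\mathbf p\mathbf s\mathbf q\approx\mathbf p\mathbf t\mathbf q$, obtaining $[\mathbf p\mathbf t\mathbf q]_\gamma=[\mathbf p\mathbf s\mathbf q]_\gamma\in\mathtt W$, hence $\sigma(\mathbf t)\neq 0$ as well.

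The main obstacle is the remaining case, where both $\sigma(\mathbf s),\sigma(\mathbf t)$ are non-zero and one must upgrade the stability-given equality $[\mathbf p\mathbf s\mathbf q]_\gamma=[\mathbf p\mathbf t\mathbf q]_\gamma$ to $[\mathbf s]_\gamma=[\mathbf t]_\gamma$. Naive left-right cancellation fails because $\mathfrak X^\ast/\gamma$ is not cancellative (for instance $[x\cdot x]_\gamma=[x\cdot x^2]_\gamma$ while $[x]_\gamma\neq[x^2]_\gamma$). My plan is to exploit the combinatorial description of $\gamma$: two words are $\gamma$-equivalent precisely when they have the same sequence of maximal letter-blocks and the same set of simple letters. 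Matching block-letter sequences for $\mathbf s$ and $\mathbf t$ is forced by a short case analysis of boundary merging with $\mathbf p,\mathbf q$, so the only possible obstruction is a letter $x$ that is simple in one of $\mathbf s,\mathbf t$ but multiple in the other, with the discrepancy masked by $x$ also occurring in $\mathbf p\mathbf q$. To unmask it, I would invoke substitution closure once more: starting from $\mathbf V\models\mathbf p\mathbf s\mathbf q\approx\mathbf p\mathbf t\mathbf q$, substitute the offending occurrences of $x$ in $\mathbf p,\mathbf q$ by fresh letters, argue that the modified word still lies in a $\gamma$-class of $\mathtt W^{\le_\gamma}$ (possibly by choosing $\mathbf p,\mathbf q$ or the $\mathtt W$-target judiciously among several candidates), and re-apply stability. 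Making this renaming compatible with $\mathtt W$-membership is the technical crux of the argument.
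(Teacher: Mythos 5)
The paper does not actually prove this lemma: it is imported from O.~Sapir's work (\cite[Corollary~3.6]{Sapir-21}) and stated without proof, so your attempt can only be judged on its own merits. Your forward direction is correct and complete: composing a substitution with the canonical surjection $\sigma\colon\mathfrak X^\ast\to M_\gamma(\mathtt W)$ and using that each class of $\mathtt W$ is a non-zero element is the standard argument. Your backward direction also correctly reduces the claim to showing that every $\gamma$-class in $\mathtt W^{\le_\gamma}$ (not merely in $\mathtt W$) is stable, and correctly disposes of the cases where at least one of $\sigma(\mathbf s),\sigma(\mathbf t)$ is zero.

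The remaining case, where both values are non-zero and you must upgrade $[\mathbf p\mathbf s\mathbf q]_\gamma=[\mathbf p\mathbf t\mathbf q]_\gamma$ to $[\mathbf s]_\gamma=[\mathbf t]_\gamma$, is left as a plan rather than a proof, as you acknowledge. This is where essentially all the content of the lemma lives: $\mathfrak X^\ast/\gamma$ is not cancellative, and the analogous statement fails for a general congruence $\alpha$, so the step must exploit the specific combinatorics of $\gamma$. Moreover, the repair you sketch carries a circularity risk: after replacing the offending occurrences of $x$ in the context $\mathbf p,\mathbf q$ by fresh letters, the new word $\mathbf p'\mathbf s\mathbf q'$ will in general lie in a $\gamma$-class belonging to $\mathtt W^{\le_\gamma}$ but no longer to $\mathtt W$ itself, and stability of such classes is exactly what you are trying to prove --- the hypothesis only grants stability for classes in $\mathtt W$. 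A correct argument must instead assume $[\mathbf s]_\gamma\ne[\mathbf t]_\gamma$ and, using only the freedom to adjust exponents of $\mathbf p$ and $\mathbf q$ within the fixed classes $\mathtt p,\mathtt q$ (so that $[\mathbf p\mathbf s\mathbf q]_\gamma$ remains the prescribed class $\mathtt w\in\mathtt W$), together with possibly a further derived consequence of $\mathbf s\approx\mathbf t$, produce an identity $\mathbf p\mathbf s\mathbf q\approx\mathbf p\mathbf t\mathbf q$ of $\mathbf V$ whose right-hand side visibly leaves $\mathtt w$; this requires a case analysis on whether $\mathbf s$ and $\mathbf t$ differ in their block-letter sequences or only in their sets of simple letters. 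As written, your proposal does not contain that argument, so the proof is incomplete at its decisive point.
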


Given a set $\mathtt W$ of $\alpha$-classes, let $\mathbf M_\alpha(\mathtt W)$ denote the variety generated by the monoid $M_\alpha(\mathtt W)$.
For brevity, if $\mathtt w_1,\dots,\mathtt w_k\in\mathfrak X^\ast/ \alpha$, then we write $M_\alpha(\mathtt w_1,\dots,\mathtt w_k)$ [respectively, $\mathbf M_\alpha(\mathtt w_1,\dots,\mathtt w_k)$] rather than $M_\alpha(\{\mathtt w_1,\dots,\mathtt w_k\})$ [respectively, $\mathbf M_\alpha(\{\mathtt w_1,\dots,\mathtt w_k\})$].

Given a word $\mathbf u\in\mathfrak X^\ast$, let $[\mathbf u]^\gamma$ denote the $\gamma$-class containing $\mathbf u$.
We use regular expressions to describe sets of words, in particular, $\gamma$-classes. 
Given a letter $x\in\mathfrak X^\ast$, we let $x^+:=\{x^n\mid n\in\mathbb N\}$.
Using this notation, one can represent $\gamma$-classes as words in the alphabet $\{x,\,x^+\mid x\in\mathfrak X^\ast\}$.
For example, $[x^2y^2]^\gamma=\{x^ny^k\mid n,k\ge2\}$.
Using regular expressions, we calculate non-zero elements of $M_\gamma([x^2y^2]^\gamma)$:
\[
\{[x^2y^2]^{\gamma}\}^{\le_\gamma}=\{xx^+yy^+\}^{\le_\gamma} = \{1,  x,y, xx^+,  yy^+,xy, xx^+y, xyy^+, xx^+yy^+\}.
\]

\subsection{Two known results}
Let $\mathbf w$ be a word with $\simple(\mathbf w)=\{t_1,\dots,t_m\}$. 
We may assume without loss of generality that $\mathbf w(t_1,\dots,t_m)=t_1\cdots t_m$. 
Then $\mathbf w=\mathbf w_0t_1\mathbf w_1\cdots t_m\mathbf w_m$ for some words $\mathbf w_0,\dots,\mathbf w_m$. 
The words $\mathbf w_0,\dots, \mathbf w_m$ are called \textit{blocks} of the word $\mathbf w$. 
The representation of the word $\mathbf w$ as a product of alternating simple letters and blocks is called \textit{decomposition} of $\mathbf w$.

The following statement is a combination of~\cite[Lemma~5.1]{Lee-14} and~\cite[Theorem~4.3(iv)]{Sapir-21}. 

\begin{lemma}
\label{L: identities of M(x^+yzx^+)}
Let $\mathbf u\approx\mathbf v$ be an identity of $M_\gamma(x^+yzx^+)$. 
If $\mathbf u_0t_1\mathbf u_1\cdots t_m\mathbf u_m$ is the decomposition of the word $\mathbf u$, then the decomposition of the word $\mathbf v$ has the form $\mathbf v_0t_1\mathbf v_1\cdots t_m\mathbf v_m$ and $\alf(\mathbf u_i)=\alf(\mathbf v_i)$, $i=1,\dots,m$.\qed
\end{lemma}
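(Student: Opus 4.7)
The result is stated in the excerpt as a combination of \cite[Lemma~5.1]{Lee-14} and \cite[Theorem~4.3(iv)]{Sapir-21}, so the most economical proof is to invoke both. For a self-contained argument I would use Lemma~\ref{L: M_gamma(W) in V} directly. The starting observation is a description of the non-zero elements of $M_\gamma(x^+yzx^+)$: they are the $\gamma$-classes $[\mathbf w]^\gamma$ such that $\mathbf w$ is a factor of some word of the form $x^n y z x^k$. The key consequence is that no $\gamma$-class containing a word of the form $x^a y x^b z x^c$ with $b\ge 1$ is non-zero — in $x^n y z x^k$ there is no $x$ between $y$ and $z$ — so every such word equals zero in $M_\gamma(x^+yzx^+)$.

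To establish the decomposition $\mathbf v=\mathbf v_0 t_1 \mathbf v_1\cdots t_m\mathbf v_m$, fix $1\le i<j\le m$ and apply the substitution $\phi$ sending $t_i\mapsto y$, $t_j\mapsto z$, and every other letter to the empty word. Then $\phi(\mathbf u)=yz$, so the identity $\mathbf u\approx\mathbf v$ forces $\phi(\mathbf v)\in[yz]^\gamma$; thus each of $t_i, t_j$ occurs exactly once in $\mathbf v$, with $t_i$ preceding $t_j$. Varying $i,j$ shows that $t_1,\dots,t_m$ are simple in $\mathbf v$ in the correct order, and swapping the roles of $\mathbf u$ and $\mathbf v$ excludes any additional simple letters in $\mathbf v$.

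For the alphabet equalities, fix $1\le i\le m-1$ and $a\in\alf(\mathbf u_i)$, and apply $\phi$ with $t_i\mapsto y$, $t_{i+1}\mapsto z$, $a\mapsto x$, and every other letter to the empty word. Writing $L_i, M_i, R_i$ for the total numbers of occurrences of $a$ to the left of $t_i$, inside $\mathbf u_i$, and to the right of $t_{i+1}$ respectively, we get $\phi(\mathbf u)=x^{L_i}y x^{M_i} z x^{R_i}$ with $M_i\ge 1$. By the key observation, $\phi(\mathbf u)=0$; hence $\phi(\mathbf v)=0$ as well. Since $t_i,t_{i+1}$ are simple in $\mathbf v$ in the correct order, $\phi(\mathbf v)=x^{L'_i}y x^{M'_i} z x^{R'_i}$, which is zero only when $M'_i\ge 1$, giving $a\in\alf(\mathbf v_i)$. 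The reverse containment follows by symmetry.

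The main obstacle is the boundary case $i=m$, where there is no $t_{m+1}$ to play the role of $z$. This is resolved by taking $\phi$ with $t_m\mapsto yz$, $a\mapsto x$, and every other letter to the empty word, so that $\phi(\mathbf u)=x^{L_m}yz x^{R_m}$ with $R_m\ge 1$. A short case analysis on whether $L_m$ equals $0$, $1$, or is $\ge 2$ shows that the $\gamma$-class of $\phi(\mathbf u)$ determines the positivity (and rough magnitude) of $R'_m$ in $\phi(\mathbf v)=x^{L'_m}yz x^{R'_m}$, forcing $R'_m\ge 1$ and hence $a\in\alf(\mathbf v_m)$. Swapping roles of $\mathbf u$ and $\mathbf v$ gives the reverse inclusion and completes the proof.
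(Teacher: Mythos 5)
Your argument is correct, but it is not the route the paper takes: the paper gives no proof of this lemma at all, recording it (with a \qed) as a combination of \cite[Lemma~5.1]{Lee-14} and \cite[Theorem~4.3(iv)]{Sapir-21}. What you supply is a self-contained verification by direct evaluation in the Rees quotient: the observation that the non-zero elements of $M_\gamma(x^+yzx^+)$ are exactly the factor-classes of $x^+yzx^+$, so that every class of the form $[x^ayx^bzx^c]^\gamma$ with $b\ge 1$ is zero, combined with the substitutions $t_i\mapsto y$, $t_j\mapsto z$, $a\mapsto x$ (and $t_m\mapsto yz$ at the right boundary), does force both the matching decomposition and the block-alphabet equalities; your boundary case is sound, and in fact the subcase $L_m=0$, $R_m=1$ cannot even occur because $a$ is multiple in $\mathbf u$. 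Two small points of hygiene: the tool you actually use is not Lemma~\ref{L: M_gamma(W) in V} (which characterizes when $M_\gamma(\mathtt W)$ \emph{belongs to} a variety) but simply the definition of an identity holding in $M_\gamma(x^+yzx^+)$ under substitutions into the monoid; and before ``swapping the roles of $\mathbf u$ and $\mathbf v$'' you should record the one-line checks that $\alf(\mathbf u)=\alf(\mathbf v)$ and $\mul(\mathbf u)=\mul(\mathbf v)$ (substitute the zero element, respectively $[x]^\gamma$, for the letter in question), which is what licenses writing $\phi(\mathbf v)$ in the displayed shape. The trade-off between the two approaches is the usual one: the citation keeps the preliminaries short by leaning on results proved once elsewhere, while your computation makes the lemma verifiable within the paper; note also that the paper later uses the conclusion for the block $\mathbf u_0$ as well, which your method delivers by the dual substitution, since $x^+yzx^+$ is preserved under reversal up to renaming of $y$ and $z$.
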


\begin{lemma}[\mdseries{\!\cite[Fig.~1]{Zhang-Luo-19},\cite[Fig.~1]{Sapir-21}}]
\label{L: L(M(x^+tyy^+x^+,x^+yy^+tx^+))}
The lattice $\mathfrak L(\mathbf M_\gamma(x^+tyy^+x^+)\vee \mathbf M_\gamma(x^+yy^+tx^+))$ is shown in Fig.~\ref{F: L(M(x^+tyy^+x^+,x^+yy^+tx^+))}.\qed
\end{lemma}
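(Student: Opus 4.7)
The plan is to exploit the $M_\gamma$-calculus of Lemma~\ref{L: M_gamma(W) in V}: membership of a monoid $M_\gamma(\mathtt W)$ in a variety is controlled entirely by stability of the individual $\gamma$-classes comprising $\mathtt W$. Writing $\mathbf V:=\mathbf M_\gamma(x^+tyy^+x^+)\vee\mathbf M_\gamma(x^+yy^+tx^+)$, the approach is to enumerate the (finitely many) $\gamma$-classes lying in the down-set $\{[x^+tyy^+x^+]^\gamma,[x^+yy^+tx^+]^\gamma\}^{\le_\gamma}$ and then to classify the subvarieties of $\mathbf V$ via their sets of stable such classes.

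First I would list the $\gamma$-classes that arise as factors of one of the two generators, namely short expressions such as $[xty]^\gamma$, $[tyx]^\gamma$, $[x^+ty]^\gamma$, $[tyx^+]^\gamma$, $[x^+tyy^+]^\gamma$, $[yy^+tx^+]^\gamma$, together with their mirror images. Since the natural left-right involution on $\mathfrak X^\ast$ swaps the two generators, the join variety $\mathbf V$ is self-dual, which cuts the case analysis roughly in half. For each down-closed subset $\mathtt W$ of this finite family I would form the submonoid $M_\gamma(\mathtt W)$; by Lemma~\ref{L: M_gamma(W) in V} these submonoids form a lattice under inclusion isomorphic to the lattice of down-closed subsets, and every such $M_\gamma(\mathtt W)$ generates a subvariety of $\mathbf V$.

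Next, for each pair $\mathtt W\subsetneq\mathtt W'$ of down-closed sets differing by one $\gamma$-class, I would produce a distinguishing identity, i.e., a non-trivial identity satisfied by $\mathbf M_\gamma(\mathtt W)$ but failed in $\mathbf M_\gamma(\mathtt W')$; Lemma~\ref{L: identities of M(x^+yzx^+)} is a ready source of such identities for classes of shape $x^+\cdots x^+$, and for the remaining classes one writes down an identity explicitly violated by a failed substitution into the added generator. Pinning down these identities establishes strict inclusions and forces the map $\mathtt W\mapsto\mathbf M_\gamma(\mathtt W)$ to be injective, which yields the Hasse diagram up to verification of the missing covering relations.

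The main obstacle is the converse direction: showing that \emph{every} subvariety of $\mathbf V$ arises as $\mathbf M_\gamma(\mathtt W)$ for some down-closed $\mathtt W$. Given $\mathbf U\le\mathbf V$, the natural candidate is the set $\mathtt W_{\mathbf U}$ of all $\gamma$-classes in the above down-set that are stable with respect to $\mathbf U$; Lemma~\ref{L: M_gamma(W) in V} immediately gives $M_\gamma(\mathtt W_{\mathbf U})\in\mathbf U$, so $\mathbf M_\gamma(\mathtt W_{\mathbf U})\le\mathbf U$. The hard part is the reverse inclusion: one must show that every identity true in $\mathbf M_\gamma(\mathtt W_{\mathbf U})$ is already true in $\mathbf U$. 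I would argue this by contradiction, showing that any identity separating $\mathbf U$ from $\mathbf M_\gamma(\mathtt W_{\mathbf U})$ would, through a careful substitution into the generator of a $\gamma$-class omitted from $\mathtt W_{\mathbf U}$, contradict the non-stability of that class, thereby closing the loop.
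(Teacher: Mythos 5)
First, a point of reference: the paper does not prove this lemma at all --- it is imported, with the \qed attached to the statement, from \cite[Fig.~1]{Zhang-Luo-19} and \cite[Fig.~1]{Sapir-21}. So there is no internal proof to compare yours against; what you are proposing is to reprove a nontrivial result of O.~Sapir and of Zhang and Luo, and the proposal as written has genuine gaps.

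The first gap is combinatorial: the subvariety lattice is not the lattice of down-closed subsets of the factor poset of the two generating $\gamma$-classes. That poset already has many more elements than the thirteen visible in Fig.~\ref{F: L(M(x^+tyy^+x^+,x^+yy^+tx^+))} (among the factors of $x^+tyy^+x^+$ alone are $x$, $x^+$ wait rather $xx^+$, $yy^+$, $x^+t$, $tyy^+$, $yy^+x^+$, $x^+tyy^+$, $tyy^+x^+$, $yx$, \dots), so its lattice of down-sets is far larger than the true subvariety lattice and the map $\mathtt W\mapsto\mathbf M_\gamma(\mathtt W)$ cannot be injective on it (for instance $M_\gamma(\{[x]^\gamma\})$ and $M_\gamma(\{[x]^\gamma,[t]^\gamma\})$ generate the same variety $\mathbf M(x)$). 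In the other direction your enumeration misses data: the subvariety $\mathbf M_\gamma(x^+yzx^+)$ appearing in Fig.~\ref{F: L(M(x^+tyy^+x^+,x^+yy^+tx^+))} is not generated by a factor of either top class --- every word in $x^+yzx^+$ contains two adjacent distinct simple letters flanked on both sides by multiple occurrences of $x$, and no factor of a word in $x^+tyy^+x^+$ or $x^+yy^+tx^+$ has this shape --- so the labelled generators of the figure do not all lie in the down-set you propose to enumerate. The second and more serious gap is the converse direction, which you correctly identify as the main obstacle but do not resolve. Lemma~\ref{L: M_gamma(W) in V} gives $\mathbf M_\gamma(\mathtt W_{\mathbf U})\subseteq\mathbf U$ for free, but the reverse inclusion requires showing that non-stability of each omitted class forces $\mathbf U$ to satisfy an explicit identity basis for $\mathbf M_\gamma(\mathtt W_{\mathbf U})$ relative to the join variety; that is, one needs exclusion identities of exactly the kind developed (with considerable effort) in Section~\ref{Sec: exclusion identities}, together with a solution of the identity-checking problem for each of the monoids involved in the spirit of Lemma~\ref{L: identities of M(x^+yzx^+)}. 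Saying that a separating identity ``would, through a careful substitution, contradict the non-stability of that class'' names this difficulty rather than discharging it: non-stability hands you \emph{one} identity of $\mathbf U$ moving a word out of a class, whereas you must deduce from it \emph{every} identity of $M_\gamma(\mathtt W_{\mathbf U})$. That deduction is the entire content of the cited results and is absent from the proposal.
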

\begin{figure}[htb]
\unitlength=1mm
\linethickness{0.4pt}
\begin{center}
\begin{picture}(36,99)
\put(8,43){\circle*{1.33}}
\put(8,63){\circle*{1.33}}
\put(8,83){\circle*{1.33}}
\put(18,3){\circle*{1.33}}
\put(18,13){\circle*{1.33}}
\put(18,23){\circle*{1.33}}
\put(18,33){\circle*{1.33}}
\put(18,53){\circle*{1.33}}
\put(18,73){\circle*{1.33}}
\put(18,93){\circle*{1.33}}
\put(28,43){\circle*{1.33}}
\put(28,63){\circle*{1.33}}
\put(28,83){\circle*{1.33}}
\gasset{AHnb=0,linewidth=0.4}
\drawline(18,3)(18,33)(28,43)(8,63)(28,83)(18,93)(8,83)(28,63)(8,43)(18,33)
\put(7,63){\makebox(0,0)[rc]{$\mathbf M_\gamma(xx^+yy^+)$}}
\put(19,23){\makebox(0,0)[lc]{$\mathbf M(x)$}}
\put(19,32){\makebox(0,0)[lc]{$\mathbf M(xy)$}}
\put(7,43){\makebox(0,0)[rc]{$\mathbf M_\gamma(yxx^+)$}}
\put(29,43){\makebox(0,0)[lc]{$\mathbf M_\gamma(xx^+y)$}}
\put(7,83){\makebox(0,0)[rc]{$\mathbf M_\gamma(x^+tyy^+x^+)$}}
\put(18,96){\makebox(0,0)[cc]{$\mathbf M_\gamma(x^+tyy^+x^+)\vee \mathbf M_\gamma(x^+yy^+tx^+)$}}
\put(29,83){\makebox(0,0)[lc]{$\mathbf M_\gamma(x^+yy^+tx^+)$}}
\put(29,63){\makebox(0,0)[lc]{$\mathbf M_\gamma(x^+yzx^+)$}}
\put(19,13){\makebox(0,0)[lc]{$\mathbf M(1)$}}
\put(18,0){\makebox(0,0)[cc]{$\mathbf M(\varnothing)$}}
\end{picture}
\end{center}
\caption{The lattice $\mathfrak L(\mathbf M_\gamma(x^+tyy^+x^+)\vee \mathbf M_\gamma(x^+yy^+tx^+))$}
\label{F: L(M(x^+tyy^+x^+,x^+yy^+tx^+))}
\end{figure}
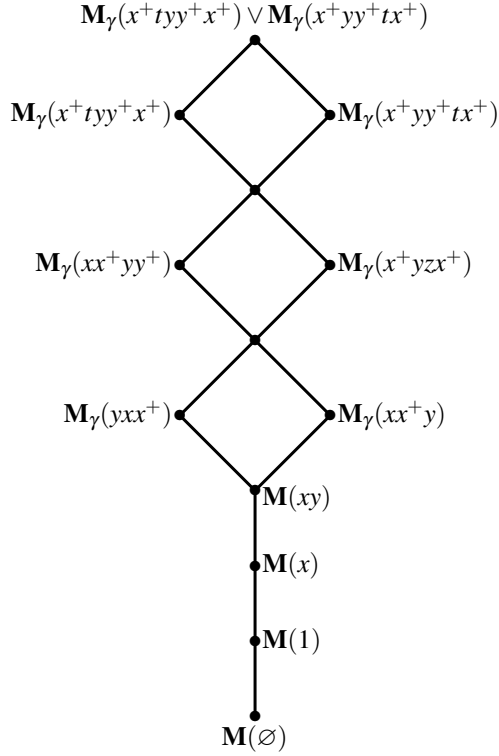

\section{Exclusion identities}
\label{Sec: exclusion identities}

Here we provide exclusion identities which are satisfied by varieties not containing given monoid varieties.

\begin{lemma}
\label{L: nsub M(xx^+yy^+)}
Let $\mathbf V$ be a monoid variety satisfying the identity $x^n\approx x^{n+1}$ for some $n\in\mathbb N$.
If $M_\gamma(xx^+yy^+)\notin \mathbf V$, then $\mathbf V$ satisfies the identity $x^ny^n\approx (x^ny^n)^n$.
\end{lemma}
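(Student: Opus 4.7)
The plan is to reduce the statement to finding some integer $m \geq 2$ with $\mathbf V \models \mathbf w \approx \mathbf w^m$, where $\mathbf w := x^ny^n$. Substituting $x \mapsto \mathbf w$ in the hypothesis $x^n\approx x^{n+1}$ yields $\mathbf V\models\mathbf w^n\approx\mathbf w^{n+1}$, so $\mathbf V\models\mathbf w^k\approx\mathbf w^n$ for every $k\geq n$; moreover, iteratively replacing each factor of $\mathbf w^m$ by $\mathbf w^m$ in the identity $\mathbf w\approx\mathbf w^m$ gives $\mathbf V\models\mathbf w\approx\mathbf w^{m^s}$ for every $s\geq 0$. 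Choosing $s$ with $m^s\geq n$ therefore yields $\mathbf V\models\mathbf w\approx\mathbf w^n$, which is the desired identity $x^ny^n\approx(x^ny^n)^n$.

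To produce such an $m$, I invoke Lemma~\ref{L: M_gamma(W) in V}: since $M_\gamma(xx^+yy^+)\notin\mathbf V$, the $\gamma$-class $[x^2y^2]^\gamma=\{x^py^q:p,q\geq 2\}$ is not stable with respect to $\mathbf V$, so there exist integers $a,b\geq 2$ and a word $\mathbf v\notin[x^2y^2]^\gamma$ with $\alf(\mathbf v)=\{x,y\}$ such that $\mathbf V\models x^ay^b\approx\mathbf v$. If at least one of $x,y$ is simple in $\mathbf v$, say $y$, then substituting $x\mapsto 1$ gives $\mathbf V\models y^b\approx y$, and the further substitution $y\mapsto\mathbf w$ produces $\mathbf V\models\mathbf w^b\approx\mathbf w$, so $m:=b\geq 2$ suffices.

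In the remaining case both letters are multiple in $\mathbf v$, and $\mathbf v\notin[x^2y^2]^\gamma$ forces the run decomposition of $\mathbf v$ to consist of $\ell\geq 2$ alternating blocks of $x$'s and $y$'s whose first-and-last-block pattern $(w_1,w_\ell)$ is not $(x,y)$. Substituting $x\mapsto x^n$, $y\mapsto y^n$ into $x^ay^b\approx\mathbf v$ and collapsing via $z^{kn}\approx z^n$ for $z\in\{x,y\}$ and $k\geq 1$ yields $\mathbf V\models\mathbf w\approx\widetilde{\mathbf v}$ with $\widetilde{\mathbf v}=w_1^nw_2^n\cdots w_\ell^n$. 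I then handle each of the four possible shapes of $(w_1,w_\ell)$ by pre- and/or post-multiplying $\mathbf w\approx\widetilde{\mathbf v}$ by $x^n$ or $y^n$ so as to collapse the left side back to $\mathbf w$ and turn the right side into a power of $\mathbf w$: when $(w_1,w_\ell)=(x,y)$ (necessarily $\ell\geq 4$ even) already $\widetilde{\mathbf v}=\mathbf w^{\ell/2}$; when $(w_1,w_\ell)=(x,x)$ with $\ell$ odd $\geq 3$, right-multiplication by $y^n$ gives $\mathbf w\approx\mathbf w^{(\ell+1)/2}$; when $(w_1,w_\ell)=(y,y)$, the dual left-multiplication by $x^n$ gives $\mathbf w\approx\mathbf w^{(\ell+1)/2}$; and when $(w_1,w_\ell)=(y,x)$ (necessarily $\ell\geq 2$ even), left-multiplication by $x^n$ together with right-multiplication by $y^n$ transforms $\mathbf w\approx(y^nx^n)^{\ell/2}$ into $\mathbf w\approx\mathbf w^{\ell/2+1}$. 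In every sub-case the exponent is at least $2$. The most delicate point is the bookkeeping in the last sub-case, where one verifies that prepending $x^n$ to $(y^nx^n)^{\ell/2}$, appending $y^n$, and performing the collapses $z^{2n}\approx z^n$ yields exactly $(x^ny^n)^{\ell/2+1}$; this is a straightforward block count using the alternation throughout.
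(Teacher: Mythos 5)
Your proposal is correct and takes essentially the same route as the paper: both use Lemma~\ref{L: M_gamma(W) in V} to extract an identity $x^ay^b\approx\mathbf v$ with $\mathbf v\notin xx^+yy^+$, substitute $n$th powers and pad with $x^n$ on the left and $y^n$ on the right to obtain $x^ny^n\approx(x^ny^n)^m$ with $m\ge2$, and then normalize the exponent via $x^n\approx x^{n+1}$ (your treatment is in fact more careful, as it explicitly handles the degenerate case where $x$ or $y$ is simple in $\mathbf v$, which the paper's one-line claim that $\mathbf v$ contains $yx$ glosses over). The only blemish is the sentence asserting that the block pattern $(w_1,w_\ell)$ cannot be $(x,y)$, which is contradicted by your own (correct and exhaustive) four-case analysis that does handle $(x,y)$ with $\ell\ge4$.
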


\begin{proof}
In view of Lemma~\ref{L: M_gamma(W) in V}, the variety $\mathbf V$ satisfies an identity $x^py^q \approx \mathbf w$, where $p, q\ge2$ and $\mathbf w\notin xx^+yy^+$. 
We may assume without any loss that $\alf(\mathbf w)=\{x, y\}$.
Then $\mathbf w$ contains $yx$ as a factor. 
We substitute $x^n$ for $x$ and $y^n$ for $x$ in the identity $x^py^q \approx \mathbf w$ and then multiply the identity by $x^n$ on the left and $y^n$ on the right.
We obtain an identity which is equivalent modulo $x^n\approx x^{n+1}$ to $x^ny^n\approx (x^ny^n)^m$ for some $m \ge 2$.
It is clear that the latter identity together with $x^n\approx x^{n+1}$ imply $x^ny^n\approx (x^ny^n)^n$.
\end{proof}

\begin{lemma}
\label{L: nsub M(x^+tyy^+x^+)}
Let $\mathbf V$ be a monoid variety satisfying the identities 
\begin{align}
\label{xyx=xyxx}
xyx&{}\approx xyx^2,\\
\label{xyx=xxyx}
xyx&{}\approx x^2yx,\\
\label{xyxy=yxyx}
(xy)^2&{}\approx (yx)^2.
\end{align}
If $M_\gamma(x^+tyy^+x^+)\notin \mathbf V$, then $\mathbf V$ satisfies the identity
\begin{equation}
\label{xtyyx=xtxyyx}
xty^2x\approx xtxy^2x.
\end{equation}
\end{lemma}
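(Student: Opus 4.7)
The plan is as follows. By Lemma~\ref{L: M_gamma(W) in V}, the assumption $M_\gamma(x^+tyy^+x^+)\notin\mathbf V$ is equivalent to the $\gamma$-class $x^+tyy^+x^+=[xty^2x]^\gamma$ being not stable with respect to $\mathbf V$. Hence there exist words $\mathbf u$ and $\mathbf v$ with $\mathbf V\vDash\mathbf u\approx\mathbf v$, $\mathbf u\in[xty^2x]^\gamma$, and $\mathbf v\notin[xty^2x]^\gamma$.

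First I would collapse the entire $\gamma$-class to the distinguished representative $xty^2x$ modulo $\mathbf V$. Setting $y\mapsto 1$ in~\eqref{xyx=xyxx} gives $x^2\approx x^3$, and hence $y^q\approx y^2$ for every $q\ge 2$ by renaming; substituting $y\mapsto ty^q$ into~\eqref{xyx=xyxx} and~\eqref{xyx=xxyx} yields $x^pty^qx^r\approx xty^qx$ for all $p,r\ge 1$. Writing $\mathbf u=x^pty^qx^r$, one obtains $\mathbf V\vDash\mathbf u\approx xty^2x$, whence $\mathbf V\vDash xty^2x\approx\mathbf v$ for some $\mathbf v\notin[xty^2x]^\gamma$.

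A case analysis on $\mathbf v$ then completes the proof. The flagship (and simplest) case is $t\notin\alf(\mathbf v)$: substituting $t\mapsto 1$ into $\mathbf V\vDash xty^2x\approx\mathbf v$ yields $\mathbf V\vDash xy^2x\approx\mathbf v$, and transitivity gives $\mathbf V\vDash xty^2x\approx xy^2x$; next substituting $t\mapsto tx$ into the latter identity produces $\mathbf V\vDash xtxy^2x\approx xy^2x$, and combining these two yields the desired identity~\eqref{xtyyx=xtxyyx}.

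In the remaining cases (where $t\in\alf(\mathbf v)$), I would first use the hypothesis identities and the substitution technique to normalize $\mathbf v$ modulo $\mathbf V$; specifically, one may assume $\alf(\mathbf v)\subseteq\{x,y,t\}$ by substituting each extra letter by a suitable element of $\{1,x,y,t\}$ chosen so that the image still escapes $[xty^2x]^\gamma$. The analysis then splits according to why $\mathbf v\notin[xty^2x]^\gamma$: either $t$ occurs multiply in $\mathbf v$, or a flanking $x$-block of $\mathbf v$ is absent, or an $x$ or a $y$ is misplaced with respect to the unique simple letter $t$. Each subcase is resolved by an analogous pair of substitutions combined with~\eqref{xyx=xyxx}, \eqref{xyx=xxyx}, and~\eqref{xyxy=yxyx}, the last of which is the essential tool for commuting adjacent pairs when $t$ is not immediately followed by the $y$-block in $\mathbf v$. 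I expect the main obstacle to be precisely this case analysis: the configuration where $\mathbf v$ itself already contains an $x$ between $t$ and the $y$-block is structurally closest to the target~\eqref{xtyyx=xtxyyx} and will be the most delicate to reduce.
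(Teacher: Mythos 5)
Your overall strategy is genuinely different from the paper's: the paper runs no case analysis at all, but instead invokes Theorem~4.3(iii) of~\cite{Sapir-21} together with the dual of Lemma~4.5 of~\cite{Gusev-Sapir-22} to obtain the identity $xty^2x\approx xt(yx)^2$, and then finishes with the short chain
\[
xty^2x\approx xt(yx)^2\approx xt(xy)^2\approx xtx(xy)^2\approx xtx(yx)^2\approx xtxy^2x,
\]
using \eqref{xyxy=yxyx} twice, \eqref{xyx=xyxx} once, and the cited identity again under the substitution $t\mapsto tx$. Your opening moves (destabilization via Lemma~\ref{L: M_gamma(W) in V}, normalization of $\mathbf u$ to $xty^2x$ from \eqref{xyx=xyxx}, \eqref{xyx=xxyx} and $x^2\approx x^3$) are correct, and your case $t\notin\alf(\mathbf v)$ is complete and correct.

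There is, however, a genuine gap: the case analysis you defer is the entire mathematical content of the lemma --- it is exactly what the two cited results encapsulate --- and the assertion that each remaining subcase ``is resolved by an analogous pair of substitutions'' does not hold as stated. Take $\mathbf v=xt(yx)^2$, which lies outside $x^+tyy^+x^+$ and can perfectly well be the destabilizing word: no pair of substitutions analogous to your $t\mapsto 1$, $t\mapsto tx$ trick yields \eqref{xtyyx=xtxyyx} from $xty^2x\approx xt(yx)^2$; what is required is the specific five-step chain above, in which \eqref{xyxy=yxyx} is applied in two different contexts and the hypothesis identity is reused under $t\mapsto tx$. More seriously, before reaching such normal forms you must show that an arbitrary $\mathbf v\notin x^+tyy^+x^+$ can be driven, modulo $\mathbf V$ and the three given identities, to one of finitely many shapes from which \eqref{xtyyx=xtxyyx} follows. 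Even your preliminary reduction to $\alf(\mathbf v)\subseteq\{x,y,t\}$ needs an argument: substituting an extra letter by $1$ may land the word back in the class, so you must either prove that some substitution into $\{1,x,y,t\}$ always preserves escape from the class or set up an induction that handles the failure. Until the subcases with $t\in\alf(\mathbf v)$ --- multiple occurrences of $t$, a missing flanking $x$-block, interleaved occurrences of $x$ and $y$ --- are each written out and closed, the proof is incomplete at its crux; you have correctly located the obstacle but not overcome it.
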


\begin{proof}
A combination of Theorem~4.3(iii) in~\cite{Sapir-21} and the dual to Lemma~4.5 in~\cite{Gusev-Sapir-22} implies that the variety $\mathbf V$ satisfies the identity
\begin{equation}
\label{xtyyx=xtyxyx}
xty^2x\approx xt(yx)^2.
\end{equation}
Then, since
\[
xty^2x\stackrel{\eqref{xtyyx=xtyxyx}}\approx xt(yx)^2\stackrel{\eqref{xyxy=yxyx}}\approx xt(xy)^2\stackrel{\eqref{xyx=xyxx}}\approx xtx(xy)^2\stackrel{\eqref{xyxy=yxyx}}\approx xtx(yx)^2\stackrel{\eqref{xtyyx=xtyxyx}}\approx xtxy^2x,
\]
the identity~\eqref{xtyyx=xtxyyx} holds in $\mathbf V$ as well.
\end{proof}

For any $x,y\in\mathfrak X$, we denote by $\overline{x^+y^+}$ the set of all words in the alphabet $\{x,y\}$ which are not in $x^+y^+$. 
In other words, $\overline{x^+y^+}:=\{\mathbf w\in\mathfrak X^\ast\mid \alf(\mathbf w)=\{x,y\},\ \mathbf w\notin x^+y^+\}$.
For any $n\in\mathbb N$, let
\[
\mathbf c_n:=xytxy_1^2\cdots y_n^2y\ \text{ and }\ \mathbf c_n^\prime:=yxtxy_1^2\cdots y_n^2y.
\] 
For brevity, put
\[
\begin{aligned}
&\mathtt c_n:=[\mathbf c_n]^\gamma=x^+y^+tx^+y_1y_1^+\cdots y_ny_n^+y^+,\\
&\mathtt c_n^\prime:=[\mathbf c_n^\prime]^\gamma=y^+x^+tx^+y_1y_1^+\cdots y_ny_n^+y^+.
\end{aligned}
\]

\begin{lemma}
\label{L: nsub M(c_p),M(c_p')}
Let $\mathbf V$ be a variety satisfying the identities~\eqref{xyx=xyxx},~\eqref{xyx=xxyx} and~\eqref{xyxy=yxyx}.
Assume that $\mathbf V$ does not contain the monoids $M_\gamma(\mathtt c_p)$ and $M_\gamma(\mathtt c_p^\prime)$ for all $p\in\mathbb N$. 
Then $\mathbf V$ satisfies the identity $\mathbf c_{n,m,k}[\rho] \approx\mathbf c_{n,m,k}^\prime[\rho]$ for any $n,m,k\in\mathbb N$ and $\rho\in S_{n+m+k}$.
\end{lemma}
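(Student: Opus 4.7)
The plan is to reduce the lemma to establishing the identity $\mathbf c_p \approx \mathbf c_p^\prime$ in $\mathbf V$ for every $p\in\mathbb N$, and then to promote this family of identities to the desired one by substitution combined with left- and right-multiplication.

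For the reduction step, Lemma~\ref{L: M_gamma(W) in V} and the hypothesis $M_\gamma(\mathtt c_p)\notin\mathbf V$ supply, for each $p$, a non-trivial identity $\mathbf c_p \approx \mathbf w$ of $\mathbf V$ with $\mathbf w \notin \mathtt c_p$. Identities~\eqref{xyx=xyxx} and~\eqref{xyx=xxyx} let one freely adjust the exponent of any multiple letter, so any two words in the same $\gamma$-class are already $\mathbf V$-equivalent; thus the content of $\mathbf c_p \approx \mathbf w$ is a $\mathbf V$-identification of $\mathtt c_p$ with some other $\gamma$-class $\mathtt w$. I would then show that $\mathtt w = \mathtt c_p^\prime$. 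Since $t$ is the only simple letter of $\mathbf c_p$, the decomposition of $\mathbf w$ takes the form $\mathbf w_0 t \mathbf w_1$ with $\alf(\mathbf w_0)=\{x,y\}$ and $\alf(\mathbf w_1)=\{x,y,y_1,\ldots,y_p\}$. Among the three axioms, only~\eqref{xyxy=yxyx} is capable of crossing $\gamma$-class boundaries, and only by interchanging patterns of the form $(ab)^2$ with $(ba)^2$. A case analysis on the possible reduced forms of $\mathbf w_0$ and $\mathbf w_1$, together with the dual hypothesis $M_\gamma(\mathtt c_p^\prime)\notin\mathbf V$ used to eliminate every candidate other than $\mathtt c_p^\prime$, forces $\mathtt w=\mathtt c_p^\prime$ and so yields $\mathbf V$ satisfying $\mathbf c_p\approx\mathbf c_p^\prime$ for every $p$.

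For the promotion step, I set $p:=n+m+k-1$ and seek to realise $\mathbf c_{n,m,k}[\rho]$ as $\mathbf A\,\phi(\mathbf c_p)\,\mathbf B$, where $\mathbf A=\prod_{i=1}^{n}z_it_i$, $\mathbf B=\prod_{i=n+m+1}^{n+m+k}t_iz_i$, and $\phi$ is a substitution with $\phi(x)=x$, $\phi(y)=y$, and $\phi(t)=t\prod_{i=n+1}^{n+m}z_it_i$. A single substitution cannot literally reproduce the target, because squaring $\phi(y_j)$ would duplicate any $z$-letter placed inside $\phi(y_j)$, whereas in $\mathbf c_{n,m,k}[\rho]$ each $z_{\rho(i)}$ occurs exactly once in the interleaved block. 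I would therefore either iterate $\mathbf c_p\approx\mathbf c_p^\prime$ at enlarged values of $p$ and collapse the redundant duplicates via the exponent-bumping identities~\eqref{xyx=xyxx} and~\eqref{xyx=xxyx}, or first establish a strengthened form of $\mathbf c_p\approx\mathbf c_p^\prime$ featuring $z$-letters already interleaved between the $y_j^2$'s. Birkhoff's deduction theorem (Proposition~\ref{P: deduction}) then supplies $\mathbf c_{n,m,k}[\rho]\approx\mathbf c_{n,m,k}^\prime[\rho]$.

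The main obstacle is the structural identification in the reduction step: showing that among all $\gamma$-classes $\mathtt w\neq\mathtt c_p$, only $\mathtt c_p^\prime$ can be $\mathbf V$-identified with $\mathtt c_p$. This requires tight control over which $\gamma$-class crossings are possible under the three axioms, and essentially invokes the dual hypothesis $M_\gamma(\mathtt c_p^\prime)\notin\mathbf V$ to eliminate the remaining candidates. By comparison the promotion step is more technical than conceptually subtle, provided the correct intermediate identity is in hand.
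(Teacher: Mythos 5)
Your reduction step contains a genuine gap. From $M_\gamma(\mathtt c_p)\notin\mathbf V$ and Lemma~\ref{L: M_gamma(W) in V} you obtain an identity $\mathbf u\approx\mathbf w$ of $\mathbf V$ with $\mathbf u\in\mathtt c_p$ and $\mathbf w\notin\mathtt c_p$, but you then analyse $\mathbf w$ as if this identity had to be a consequence of the three axioms~\eqref{xyx=xyxx}--\eqref{xyxy=yxyx}. The hypothesis is only that $\mathbf V$ \emph{satisfies} these identities, not that it is defined by them, so the observation that ``only~\eqref{xyxy=yxyx} can cross $\gamma$-class boundaries'' gives no control over $\mathbf w$: a priori $\mathbf w$ can be any word, $t$ need not remain simple in it, and its blocks need not preserve alphabets (consider the trivial variety, which satisfies all three axioms). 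For the same reason the dual hypothesis $M_\gamma(\mathtt c_p^\prime)\notin\mathbf V$ cannot ``eliminate candidates'' for the class of $\mathbf w$ --- it asserts that $\mathtt c_p^\prime$ is likewise unstable, which if anything makes $\mathtt c_p^\prime$ a less, not more, distinguished target. The conclusion $\mathbf w\in\mathtt c_p^\prime$ is simply not available, and the paper never claims it.

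What is actually needed, and what the paper does, is a case split on whether $M_\gamma(x^+tyy^+x^+)$ and $M_\gamma(x^+yy^+tx^+)$ lie in $\mathbf V$. If one of them does not, Lemma~\ref{L: nsub M(x^+tyy^+x^+)} (or its dual) supplies the exclusion identity~\eqref{xtyyx=xtxyyx}, from which~\eqref{xy.. = xyxy..},~\eqref{yx.. = yxyx..} and hence $\mathbf c_r\approx\mathbf c_r^\prime$ follow directly --- here the instability of $\mathtt c_r$ is not used at all. If both lie in $\mathbf V$, then Lemmas~\ref{L: M_gamma(W) in V} and~\ref{L: L(M(x^+tyy^+x^+,x^+yy^+tx^+))} show that the $\gamma$-classes $x^+tyy^+x^+$, $xx^+yy^+$ and $x^+yzx^+$ are stable with respect to $\mathbf V$, and only this pins the witness down to $\mathbf w\in\overline{x^+y^+}tx^+y_1y_1^+\cdots y_ry_r^+y^+$ --- still a union of many $\gamma$-classes, not just $\mathtt c_r^\prime$. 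The identity $\mathbf c_r\approx\mathbf c_r^\prime$ is then reached not by identifying the class of $\mathbf w$ but by a substitution trick (prepend $x$ and replace $t$ by $yt$) turning $\mathbf u\approx\mathbf w$ into~\eqref{xy.. = xyxy..}, doing the same with the instability witness for $\mathtt c_r^\prime$ to obtain~\eqref{yx.. = yxyx..}, and chaining these with~\eqref{xyxy=yxyx}. Your promotion step (squaring the $z$-letters via~\eqref{xyx=xyxx} and~\eqref{xyx=xxyx} and applying $\mathbf c_r\approx\mathbf c_r^\prime$ at the enlarged index $r=2(n+m+k-1)+1$) is essentially the paper's, but the reduction as you describe it does not go through.
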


\begin{proof}
Let $r:=2(n+m+k-1)+1$.
Since
\[
\begin{aligned}
\mathbf c_{n,m,k}[\rho]&{}\stackrel{\{\eqref{xyx=xyxx},\,\eqref{xyx=xxyx}\}}\approx \biggl(\prod_{i=1}^n z_it_i\biggr)xyt\biggl(\prod_{i=n+1}^{n+m} z_it_i\biggr)x\biggl(\prod_{i=1}^{n+m+k-1} z_{i\rho}^2y_i^2\biggr)z_{(n+m+k)\rho}^2y\biggl(\prod_{i=n+m+1}^{n+m+k} t_iz_i\biggr)\\
&{}\ \ \ \ \ \stackrel{\mathbf c_r\approx \mathbf c_r^\prime}\approx\ \ \ \ \ \biggl(\prod_{i=1}^n z_it_i\biggr)yxt\biggl(\prod_{i=n+1}^{n+m} z_it_i\biggr)x\biggl(\prod_{i=1}^{n+m+k-1} z_{i\rho}^2y_i^2\biggr)z_{(n+m+k)\rho}^2y\biggl(\prod_{i=n+m+1}^{n+m+k} t_iz_i\biggr)\\
&{}\stackrel{\{\eqref{xyx=xyxx},\,\eqref{xyx=xxyx}\}}\approx \mathbf c_{n,m,k}^\prime[\rho],
\end{aligned}
\]
it suffices to show that $\mathbf c_r\approx \mathbf c_r^\prime$ is satisfied by $\mathbf V$.

Suppose first that $M_\gamma(x^+tyy^+x^+)\notin \mathbf V$.
Then the identity~\eqref{xtyyx=xtxyyx} holds in $\mathbf V$ by Lemma~\ref{L: nsub M(x^+tyy^+x^+)}.
In this case, $\mathbf V$ satisfies the identity
\begin{equation}
\label{xy.. = xyxy..}
xytxy_1^2\cdots y_r^2y\approx (xy)^2txy_1^2\cdots y_r^2y
\end{equation}
because
\[
xytxy_1^2\cdots y_r^2y\stackrel{\eqref{xtyyx=xtxyyx}}\approx xytxyy_1^2\cdots y_r^2y\stackrel{\eqref{xyx=xxyx}}\approx (xy)^2txyy_1^2\cdots y_r^2y.
\]
By similar arguments, we can show that $\mathbf V$ satisfies
\begin{equation}
\label{yx.. = yxyx..}
yxtxy_1^2\cdots y_r^2y\approx (yx)^2txy_1^2\cdots y_r^2y.
\end{equation}
Then the identities
\begin{equation}
\label{c_r=...=c_r'}
\mathbf c_r\stackrel{\eqref{xy.. = xyxy..}}\approx (xy)^2txy_1^2\cdots y_r^2y\stackrel{\eqref{xyxy=yxyx}}\approx (yx)^2txy_1^2\cdots y_r^2y \stackrel{\eqref{yx.. = yxyx..}}\approx \mathbf c_r^\prime
\end{equation}
are satisfied by $\mathbf V$ and we are done.

Suppose now that $M_\gamma(x^+yy^+tx^+)\notin \mathbf V$.
Then, by the dual to Lemma~\ref{L: nsub M(x^+tyy^+x^+)}, the variety $\mathbf V$ satisfies the identity $xy^2tx\approx xy^2xtx$, which is evidently implies both the identities~\eqref{xy.. = xyxy..} and~\eqref{yx.. = yxyx..}, and so $\mathbf V$ satisfies~\eqref{c_r=...=c_r'}.

Finally, suppose that $\mathbf M_\gamma(x^+yy^+tx^+)\vee \mathbf M_\gamma(x^+tyy^+x^+)\subseteq\mathbf V$.
By the condition of the lemma, $M_\gamma(\mathtt c_r)\notin\mathbf V$.
According to Lemma~\ref{L: M_gamma(W) in V}, the $\gamma$-class $\mathtt c_r$ is not stable with respect to $\mathbf V$.
This implies that $\mathbf V$ satisfies an identity $\mathbf u\approx \mathbf v$ such that $\mathbf u\in \mathtt c_r$ and $\mathbf v\notin \mathtt c_r$.
Then, since $M_\gamma(x^+tyy^+x^+)\in\mathbf V$, Lemmas~\ref{L: M_gamma(W) in V} and~\ref{L: L(M(x^+tyy^+x^+,x^+yy^+tx^+))} imply that the $\gamma$-classes $x^+tyy^+x^+$, $xx^+yy^+$ and $x^+yzx^+$ are stable with respect to $\mathbf V$.
Hence
\[
\begin{aligned}
\mathbf v(x,t)\in x^+tx^+,\ 
\mathbf v(x,y_1)\in xx^+y_1y_1^+,\ 
\mathbf v(y_i,y_{i+1})\in y_iy_i^+y_{i+1}y_{i+1}^+,\ 
\mathbf v(y,t,y_r)\in y^+ty_ry_r^+y^+,
\end{aligned}
\]
$i=1,\dots,r-1$.
It follows that $\mathbf v\in \overline{x^+y^+}tx^+y_1y_1^+\cdots y_ry_r^+y^+$.
Then, substituting $yt$ for $t$ in the identity $x\mathbf u\approx x\mathbf v$, we result some identity which is equivalent modulo~$\{\eqref{xyx=xyxx},\,\eqref{xyx=xxyx}\}$ to~\eqref{xy.. = xyxy..}.
By similar arguments, we can deduce from the condition $M_\gamma(\mathtt c_r^\prime)\notin\mathbf V$ that $\mathbf V$ satisfies~\eqref{yx.. = yxyx..}.
Then the identities~\eqref{c_r=...=c_r'} are satisfied by $\mathbf V$ and we are done.
\end{proof}

\section{Certain varieties with a non-distributive subvariety lattice}
\label{Sec: non-distributive}

\begin{proposition}
\label{P: non-mod M(xyx) vee M(xx^+yy^+)}
The lattice $\mathfrak L\left(\mathbf M(xyx)\vee\mathbf M_{\gamma}(xx^+yy^+)\right)$ is not modular.
\end{proposition}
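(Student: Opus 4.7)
The goal is to exhibit a sublattice of $\mathfrak L(\mathbf M(xyx)\vee\mathbf M_\gamma(xx^+yy^+))$ isomorphic to the pentagon $N_5$; by Dedekind's classical theorem, the presence of such a sublattice is equivalent to non-modularity. Concretely, I seek three subvarieties $\mathbf A \subsetneq \mathbf B$ and $\mathbf C$, with $\mathbf C$ incomparable to both $\mathbf A$ and $\mathbf B$, satisfying the collapsing equalities $\mathbf A \vee \mathbf C = \mathbf B \vee \mathbf C$ and $\mathbf A \cap \mathbf C = \mathbf B \cap \mathbf C$; together with the common meet and common join, these five varieties form the desired pentagon.

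To populate the candidate pool, I would exploit Lemmas~\ref{L: M(W) in V} and~\ref{L: M_gamma(W) in V}: every Rees quotient monoid $M(W)$ whose elements lie among the factors of $xyx$ (that is, among the words $1, x, y, xy, yx, xyx$) belongs to $\mathbf M(xyx)$, and every $M_\gamma(\mathtt W)$ whose $\gamma$-classes are $\le_\gamma$-factors of $[x^2y^2]^\gamma$ (the nine classes listed in the explicit computation of non-zero elements of $M_\gamma(xx^+yy^+)$) belongs to $\mathbf M_\gamma(xx^+yy^+)$. Accordingly, I would scan the finite lattice generated by small varieties such as $\mathbf M(xy)$, $\mathbf M(yx)$, $\mathbf M(xyx)$, $\mathbf M_\gamma(xx^+y)$, $\mathbf M_\gamma(yxx^+)$, $\mathbf M_\gamma(xx^+yy^+)$ together with their various joins, for a triple $(\mathbf A,\mathbf B,\mathbf C)$ meeting the pentagon requirements. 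Strict inclusions $\mathbf A\subsetneq\mathbf B$ would be verified by producing an identity in $\mathrm{Id}(\mathbf A)\setminus\mathrm{Id}(\mathbf B)$ via the standard isoterm/stability tests, and the two critical equalities would then be verified via Proposition~\ref{P: deduction} (Birkhoff's Completeness Theorem) by displaying explicit deductions.

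The key combinatorial feature to look for is a distinguishing identity $\alpha \in \mathrm{Id}(\mathbf A)\setminus\mathrm{Id}(\mathbf B)$ that is deducible from $\mathrm{Id}(\mathbf B)\cup\mathrm{Id}(\mathbf C)$ (forcing $\mathbf A\vee\mathbf C = \mathbf B\vee\mathbf C$) yet \emph{not} deducible from $\mathrm{Id}(\mathbf C)$ alone (forcing $\mathbf A\cap\mathbf C = \mathbf B\cap\mathbf C$); this ``distributed deduction'' phenomenon is exactly what breaks modularity. The main obstacle is finding the correct balance: the candidate $\mathbf C$ must be rich enough that it combines with $\mathbf B$-identities to prove $\alpha$, but too weak to prove $\alpha$ in isolation. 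A naive choice where $\alpha$ is already forced by $\mathbf C$ alone collapses the join equality ($\mathbf B\not\subseteq\mathbf A\vee\mathbf C$), while a choice where $\alpha$ is independent of $\mathbf C$ collapses the meet equality; the search therefore demands a careful word-combinatorial calculation exploiting the asymmetry between isoterms of $\mathbf M(xyx)$ (fixed-length factors) and stable $\gamma$-classes of $\mathbf M_\gamma(xx^+yy^+)$ (exponent-insensitive families). Once the triple is fixed, the remaining verifications reduce to finite syntactic checks, since the generating monoids are finite and the participating $\gamma$-classes are explicit.
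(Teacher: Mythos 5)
There is a genuine gap: your text is a search plan, not a proof. You correctly identify the target structure (a pentagon, equivalently a direct failure of the modular law), but you never exhibit the three witnessing varieties, the distinguishing identity, or any of the verifications; everything is deferred to a ``scan'' and a ``careful word-combinatorial calculation.'' Worse, the candidate pool you propose to scan cannot succeed. You restrict attention to varieties $\mathbf M(W)$ with $W$ among the factors of $xyx$ and to $\mathbf M_\gamma(\mathtt W)$ with $\mathtt W$ among the $\le_\gamma$-factors of $[x^2y^2]^\gamma$, i.e.\ to subvarieties generated by divisors of the two given monoids, plus their joins. The essential witness lies outside this pool: the whole point is that the \emph{join} $\mathbf M(xyx)\vee\mathbf M_\gamma(xx^+yy^+)$ contains a Rees quotient on a strictly longer word that neither generator contains. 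Concretely, by~\cite[Fact~3.1(i)]{Sapir-15} the only non-trivial identity of $M(xyx)$ with left-hand side $xzxyty$ is $\sigma_3:\ xzxyty\approx xzyxty$, and $\mathbf M_\gamma(xx^+yy^+)$ violates $\sigma_3$; hence $xzxyty$ is an isoterm for the join even though it is an isoterm for neither generator, and Lemma~\ref{L: M(W) in V} gives $\mathbf M(xzxyty)\subseteq\mathbf M(xyx)\vee\mathbf M_\gamma(xx^+yy^+)$. This is the ``distributed deduction'' phenomenon you describe in the abstract, but it manifests through an isoterm for the join, not through an identity of a sub-Rees-quotient of the generators, so your proposed search space misses it.

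Once $\mathbf Z:=\mathbf M(xzxyty)$ is in hand, the argument is short and is a direct violation of the modular law rather than an explicit pentagon hunt: with $\mathbf X:=\mathbf M(xyx)\subset\mathbf Z$ and $\mathbf Y:=\mathbf M_\gamma(xx^+yy^+)$ one computes, using \cite[Lemmas~4.4 and~5.10]{Jackson-05} and Lemma~\ref{L: L(M(x^+tyy^+x^+,x^+yy^+tx^+))}, that
\[
\mathbf X\vee(\mathbf Y\wedge\mathbf Z)=\mathbf M(xyx)\vee\mathbf M(xy)=\mathbf M(xyx)\subsetneq\mathbf M(xzxyty)=(\mathbf X\vee\mathbf Y)\wedge\mathbf Z .
\]
Your outline would be salvageable only after you (i) enlarge the candidate pool to include varieties of the form $\mathbf M(\mathbf w)$ for longer words $\mathbf w$ that are isoterms for the join, and (ii) actually carry out the isoterm and join/meet computations; as written, no modularity failure has been demonstrated.
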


\begin{proof}
In view of~\cite[Fact~3.1(i)]{Sapir-15}, the words $xzxyty$ and $xzyxty$ can only form an identity of $M(xyx)$ with each other.
It is easy to see that $\mathbf M_\gamma(xx^+yy^+)$ violates $\sigma_3$. 
Hence $xzxyty$ is an isoterm for $\mathbf M(xyx)\vee\mathbf M_\gamma(xx^+yy^+)$.
Now Lemma~\ref{L: M(W) in V} apples, yielding that 
\[
\left(\mathbf M(xyx)\vee\mathbf M_\gamma(xx^+yy^+)\right)\wedge \mathbf M(xzxyty)=\mathbf M(xzxyty).
\]
Further, in view of~\cite[Lemmas~4.4 and~5.10]{Jackson-05} and Lemma~\ref{L: L(M(x^+tyy^+x^+,x^+yy^+tx^+))},
\[
\begin{aligned}
\mathbf M(xyx)\vee\left(\mathbf M_\gamma(xx^+yy^+)\wedge\mathbf M(xzxyty)\right)=\mathbf M(xyx)\vee\mathbf M(xy)=\mathbf M(xyx)\subset\mathbf M(xzxyty).
\end{aligned}
\]
Therefore, the lattice $\mathfrak L\left(\mathbf M(xyx)\vee\mathbf M_\gamma(xx^+yy^+)\right)$ is not modular.
\end{proof}

\begin{proposition}
\label{P: non-mod L(M([c_k]^gamma))}
The lattices $\mathfrak L\left(\mathbf M_\gamma(\mathtt c_n)\right)$ and $\mathfrak L\left(\mathbf M_\gamma(\mathtt c_n^\prime)\right)$ are not modular for any $n\in\mathbb N$.
\end{proposition}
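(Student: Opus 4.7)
Following the strategy of Proposition~\ref{P: non-mod M(xyx) vee M(xx^+yy^+)}, the plan is to exhibit three subvarieties $\mathbf A\subseteq\mathbf C$ and $\mathbf B$ of $\mathbf M_\gamma(\mathtt c_n)$ satisfying
\[
(\mathbf A\vee\mathbf B)\wedge\mathbf C=\mathbf C\supsetneq\mathbf A=\mathbf A\vee(\mathbf B\wedge\mathbf C),
\]
so that the pentagon $N_5$ embeds into $\mathfrak L(\mathbf M_\gamma(\mathtt c_n))$. The non-modularity of $\mathfrak L(\mathbf M_\gamma(\mathtt c_n^\prime))$ will then follow by applying the left-right dual of the whole argument.

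The main difference with Proposition~\ref{P: non-mod M(xyx) vee M(xx^+yy^+)} is that every subvariety of $\mathbf M_\gamma(\mathtt c_n)$ inherits the identities of $\mathfrak X^\ast/\gamma$, in particular $xyx\approx xyx^2$, so $\mathbf M(xyx)$ is no longer a candidate for $\mathbf A$. I would therefore replace $\mathbf M(xyx)$ by its $\gamma$-analog $\mathbf A:=\mathbf M_\gamma([xytx]^\gamma)$, which lies in $\mathbf M_\gamma(\mathtt c_n)$ since $[xytx]^\gamma\le_\gamma\mathtt c_n$ via the factorization $\mathbf c_n=xytx\cdot y_1^2\cdots y_n^2y$. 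I keep $\mathbf B:=\mathbf M_\gamma(xx^+yy^+)$ unchanged; it lies in $\mathbf M_\gamma(\mathtt c_n)$ because $[x^2y^2]^\gamma\le_\gamma\mathtt c_n$. For $\mathbf C$, I would take $\mathbf M_\gamma(\mathtt d)$ for a suitable $\gamma$-class $\mathtt d$ that properly extends $[xytx]^\gamma$ inside $\mathtt c_n$ and is separated from $\mathbf A$ precisely by a $\sigma_3$-type swap of two adjacent multiple letters, namely the same swap that $\mathbf B$ fails.

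With these choices, the pentagon reduces to two verifications. First, $\mathbf C\subseteq\mathbf A\vee\mathbf B$, which I would extract from Lemma~\ref{L: M_gamma(W) in V} by checking that the generating class $\mathtt d$ of $\mathbf C$ is stable with respect to $\mathbf A\vee\mathbf B$; this parallels the isoterm argument of Proposition~\ref{P: non-mod M(xyx) vee M(xx^+yy^+)}, with the $\mathbf A$-component controlling the order of the two multiple letters in the $xytx$-prefix and the $\mathbf B$-component blocking their swap (since $\mathbf M_\gamma(xx^+yy^+)\not\models\sigma_3$ and its analogs). Second, $\mathbf B\wedge\mathbf C\subseteq\mathbf A$, which I would read off from Lemma~\ref{L: L(M(x^+tyy^+x^+,x^+yy^+tx^+))} together with an explicit identification of the common stable $\gamma$-classes of $\mathbf B$ and $\mathbf C$; this meet collapses onto a small variety that is already absorbed by $\mathbf A$. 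The hard part will be the stability analysis in the first verification: I must enumerate, up to $\gamma$-equivalence, every way a representative of $\mathtt d$ can be transformed by an identity of $\mathbf A\vee\mathbf B$ and show that every such transformation stays inside $\mathtt d$. This is the ``isoterm from factor'' analysis in the $\gamma$-quotient, in the spirit of~\cite[Fact~3.1(i)]{Sapir-15}; once it is settled, both the pentagon for $\mathbf M_\gamma(\mathtt c_n)$ and its dual for $\mathbf M_\gamma(\mathtt c_n^\prime)$ follow at once.
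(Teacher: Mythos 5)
Your plan cannot be completed, and the obstruction is structural rather than a matter of missing detail. Your $\mathbf A=\mathbf M_\gamma([xytx]^\gamma)$ is, up to renaming of letters, exactly $\mathbf M_\gamma(x^+yzx^+)$, so $\mathbf A\vee\mathbf B=\mathbf M_\gamma(x^+yzx^+)\vee\mathbf M_\gamma(xx^+yy^+)$ is a subvariety of $\mathbf M_\gamma(x^+tyy^+x^+)\vee\mathbf M_\gamma(x^+yy^+tx^+)$, whose subvariety lattice is the $13$-element \emph{distributive} lattice of Lemma~\ref{L: L(M(x^+tyy^+x^+,x^+yy^+tx^+))}. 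Since you also insist that $\mathbf C\subseteq\mathbf A\vee\mathbf B$, all five vertices of your intended pentagon ($\mathbf A$, $\mathbf B$, $\mathbf C$, $\mathbf A\vee\mathbf B$, $\mathbf B\wedge\mathbf C$) lie in that distributive lattice, where the equality $(\mathbf A\vee\mathbf B)\wedge\mathbf C=\mathbf A\vee(\mathbf B\wedge\mathbf C)$ holds automatically; no choice of $\mathtt d$ can produce $N_5$ there. (If it could, the sufficiency half of Theorem~\ref{T: main} would collapse, since $\mathbf D_{15}$ contains $\mathbf M_\gamma(x^+yzx^+)\vee\mathbf M_\gamma(xx^+yy^+)$ and is proved distributive.) The pentagon of Proposition~\ref{P: non-mod M(xyx) vee M(xx^+yy^+)} works only because $M(xyx)$ violates $xyx\approx xyx^2$ and therefore escapes the Fig.~\ref{F: L(M(x^+tyy^+x^+,x^+yy^+tx^+))} lattice; no monoid of the form $M_\gamma(\mathtt w)$ can play that role, because every such monoid is a quotient of a submonoid of $\mathfrak X^\ast/\gamma$ and hence satisfies all of its identities. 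This is precisely why the $\gamma$-analogue substitution you propose does not salvage the argument.

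The paper's actual proof is of a different nature: after reducing to $n\ge3$, it introduces the long words $\mathbf v_{\xi,\eta}$ ($\xi,\eta\in S_2$) and takes for the three witnesses not varieties generated by single $\gamma$-classes but the meets $\mathbf X$, $\mathbf Y$, $\mathbf Z$ of $\mathbf M_\gamma(\mathtt c_n)$ with finitely axiomatized varieties of the form $\var\{\mathbf v_{\xi,\eta}\approx\mathbf v_{\xi',\eta'}\}$. The failure of modularity is then the strict inclusion $(\mathbf X\wedge\mathbf Y)\vee\mathbf Z\subset(\mathbf X\vee\mathbf Z)\wedge\mathbf Y$ with $\mathbf Z\subseteq\mathbf Y$, detected by tracking which unions of the classes $\mathtt v_{\xi,\eta}$ are stable with respect to each of $\mathbf X$, $\mathbf Y$, $\mathbf Z$ via a deduction-by-deduction analysis (Lemmas~\ref{L: V is a set of classes} and~\ref{L: v_{xi,eta}}) and by using that $M_\gamma(\mathtt c_n)$ itself violates $\mathbf c_n\approx\mathbf c_n'$. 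If you want to repair your write-up, you must abandon the ``three generated subvarieties'' template altogether and allow at least some of the witnesses to be defined equationally inside $\mathbf M_\gamma(\mathtt c_n)$. A final small point: $\mathtt c_n'$ is not the left--right dual of $\mathtt c_n$ (it differs only by swapping the first occurrences of $x$ and $y$), so the second lattice is handled by repeating the argument, not by dualizing it.
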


\begin{proof}
It is easy to see that $\mathbf M_\gamma(\mathtt c_{k+1})\subseteq \mathbf M_\gamma(\mathtt c_k)$ for any $k\in\mathbb N$.
This allows us to assume that $n\ge3$.
For any $\xi,\eta \in S_2$, we define the word:
\[
\mathbf v_{\xi,\eta}:= a_1b_1^2a_2b_2^2\, x_{1\xi}x_{2\xi}\,b^2\, y_{1\eta}y_{2\eta}\, b_3^2a_3b_4^2a_4\,\mathbf q,
\]
where
\[
\mathbf q :=tx_1c_1^2 a_1\biggl(\prod_{i=2}^{n-1} c_i^2\biggr) a_3 c_n^2 x_2\, c^2\,y_1d_1^2 a_2\biggl(\prod_{i=2}^{n-1} d_i^2\biggr)a_4d_n^2 y_2.
\]
Let $\varepsilon$ [respectively, $\upsilon$] denote the identity [respectively, unique non-identity] element in $S_2$.
Put
\[
\begin{aligned}
&\mathtt v_{\varepsilon,\varepsilon}:=a_1^+b_1b_1^+a_2^+b_2b_2^+\  x_1^+x_2^+\ bb^+\  y_1^+y_2^+\  b_3 b_3^+a_3^+b_4 b_4^+a_4^+\ \mathtt q,\\
&\mathtt v_{\upsilon,\varepsilon}:=a_1^+b_1b_1^+a_2^+b_2b_2^+\ \overline{x_1^+x_2^+}\ bb^+\ y_1^+y_2^+\ b_3 b_3^+a_3^+b_4 b_4^+a_4^+\ \mathtt q,\\
&\mathtt v_{\varepsilon,\upsilon}:=a_1^+b_1b_1^+a_2^+b_2b_2^+\ x_1^+x_2^+\ bb^+\ \overline{y_1^+y_2^+}\ b_3 b_3^+a_3^+b_4 b_4^+a_4^+\ \mathtt q,\\
&\mathtt v_{\upsilon,\upsilon}:=a_1^+b_1b_1^+a_2^+b_2b_2^+\ \overline{x_1^+x_2^+}\ bb^+\ \overline{y_1^+y_2^+}\ b_3 b_3^+a_3^+b_4 b_4^+a_4^+\ \mathtt q,
\end{aligned}
\]
where
\[
\mathtt q:=tx_1^+c_1c_1^+ a_1^+\biggl(\prod_{i=2}^{n-1} c_ic_i^+\biggr)a_3^+c_nc_n^+ x_2^+\, cc^+\,y_1^+d_1d_1^+ a_2^+\biggl(\prod_{i=2}^{n-1} d_id_i^+\biggr)a_4^+d_nd_n^+  y_2^+.
\]
Notice that $\mathbf v_{\xi,\eta}\in \mathtt v_{\xi,\eta}$ for any $\xi,\eta \in S_2$.
We need the following two auxiliary facts.

\begin{lemma}
\label{L: V is a set of classes}
Let $\mathbf u\approx \mathbf v$ be an identity of $\mathbf M_\gamma(\mathbf c_n)\wedge\var\{\mathbf v_{\varepsilon,\varepsilon} \approx \mathbf v_{\varepsilon,\upsilon}\approx\mathbf v_{\upsilon,\varepsilon} \approx \mathbf v_{\upsilon,\upsilon}\}$ with $\mathbf u\in \mathtt v_{\varepsilon,\varepsilon}\cup \mathtt v_{\upsilon,\varepsilon}\cup \mathtt v_{\varepsilon,\upsilon}\cup \mathtt v_{\upsilon,\upsilon}$.
Then $\mathbf v\in \mathtt v_{\varepsilon,\varepsilon}\cup \mathtt v_{\upsilon,\varepsilon}\cup \mathtt v_{\varepsilon,\upsilon}\cup \mathtt v_{\upsilon,\upsilon}$.
\end{lemma}

\begin{proof}
The evident inclusion 
\[
\mathbf M_\gamma(x^+yy^+tx^+)\vee \mathbf M_\gamma(x^+tyy^+x^+)\subseteq\mathbf M_\gamma(\mathbf c_n)\wedge\var\{\mathbf v_{\varepsilon,\varepsilon} \approx \mathbf v_{\varepsilon,\upsilon}\approx\mathbf v_{\upsilon,\varepsilon} \approx \mathbf v_{\upsilon,\upsilon}\}
\]
implies that $\mathbf u\approx \mathbf v$ is also satisfied by the variety $\mathbf M_\gamma(x^+yy^+tx^+)\vee \mathbf M_\gamma(x^+tyy^+x^+)$.
It follows from Lemmas~\ref{L: M_gamma(W) in V} and~\ref{L: L(M(x^+tyy^+x^+,x^+yy^+tx^+))} that the $\gamma$-classes $x^+tyy^+x^+$, $x^+yy^+tx^+$ and $xx^+yy^+$ are stable with respect to $\mathbf M_\gamma(x^+yy^+tx^+)\vee \mathbf M_\gamma(x^+tyy^+x^+)$.
It is routinely deduced from this fact that $\mathbf v\in \mathtt v_{\varepsilon,\varepsilon}\cup \mathtt v_{\upsilon,\varepsilon}\cup \mathtt v_{\varepsilon,\upsilon}\cup \mathtt v_{\upsilon,\upsilon}$. 
For instance, $\mathbf v(a_1,b_1,t)\in a_1^+b_1b_1^+t a_1^+$ because $\mathbf u(a_1,b_1,t)\in a_1^+b_1b_1^+t a_1^+$, while $\mathbf v(b_1,a_2)\in b_1b_1^+a_2a_2^+$ because $\mathbf u(b_1,a_2)\in b_1b_1^+a_2a_2^+$.
\end{proof}

\begin{lemma}
\label{L: v_{xi,eta}}
Let $\xi,\eta,\xi_1,\eta_1,\xi_2,\eta_2\in S_2$, $\mathbf u\in \mathtt v_{\xi,\eta}$ and $\mathbf v\notin \mathtt v_{\xi,\eta}$. 
If the identity $\mathbf u \approx \mathbf v$ is directly deducible from the identity $\mathbf v_{\xi_1,\eta_1} \approx \mathbf v_{\xi_2,\eta_2}$, then either $\mathbf u\in \mathtt v_{\xi_1,\eta_1}$ and $\mathbf v\in \mathtt v_{\xi_2,\eta_2}$ or $\mathbf v\in \mathtt v_{\xi_1,\eta_1}$ and $\mathbf u\in \mathtt v_{\xi_2,\eta_2}$.
\end{lemma}

\begin{proof}
We may assume without loss of generality that $(\mathbf u,\mathbf v)=(\mathbf a\phi(\mathbf v_{\xi_1,\eta_1})\mathbf b,\mathbf a\phi(\mathbf v_{\xi_2,\eta_2})\mathbf b)$ for some $\mathbf a,\mathbf b\in \mathfrak X^\ast$ and $\phi\colon \mathfrak X \to \mathfrak X^\ast$.
Now Lemma~\ref{L: V is a set of classes} implies that $\mathbf v\in\mathtt v_{\xi^\prime,\eta^\prime}$ for some $\xi^\prime,\eta^\prime\in S_2$.
Since $\mathbf u\in \mathtt v_{\xi,\eta}$ and $\mathbf v\notin \mathtt v_{\xi,\eta}$, we have $(\xi,\eta)\ne(\xi^\prime,\eta^\prime)$.
By symmetry, we may assume that $\xi\ne\xi^\prime$ and, moreover, $(\xi,\xi^\prime)=(\varepsilon,\upsilon)$.
Then $\mathbf u(x_1,x_2,t)\in x_1^+x_2^+tx_1^+x_2^+$ and $\mathbf v(x_1,x_2,t)\in \overline{x_1^+x_2^+}tx_1^+x_2^+$.
This is only possible when the image of $t$ under $\phi$ contains $t$, and one of the following holds:
\begin{itemize}
\item[\textup{(a)}] $\xi_1=\xi=\varepsilon$, $\xi_2=\xi^\prime=\upsilon$ and $\phi(x_i)\in x_i^+$, $i=1,2$;
\item[\textup{(b)}] $\eta_1=\xi=\varepsilon$, $\eta_2=\xi^\prime=\upsilon$ and $\phi(y_i)\in x_i^+$, $i=1,2$.
\end{itemize}
We notice also that if $x$ and $y$ are distinct letters and $xy$ occurs in $\mathbf u$ as a factor at least twice, then $\{x,y\}=\{y_1,y_2\}$ and all these factors are between $b$ and $b_3$.
Since $t\in\alf(\phi(t))$, this fact implies that
\begin{itemize}
\item[\textup{($\ast$)}] $\phi(v)$ is either the empty word or a power of letter for any $v\in\mul(\mathbf v_{\xi_1,\eta_1})$.
\end{itemize}

Suppose that~(a) holds.
Then
\[
\phi(c_1^2 a_1c_2^2\cdots c_{n-1}^2a_3c_n^2)\in c_1c_1^+ a_1^+c_2c_2^+\cdots c_{n-1}c_{n-1}^+a_3^+c_nc_n^+.
\]
It follows from~($\ast$) that $\phi(a_1)\in a_1^+$, $\phi(a_3)\in a_3^+$ and $\phi(c_i^2)\in c_ic_i^+$ for any $i=1,\dots,n$.
Then 
\[
\phi(a_1b_1^2a_2b_2^2\, x_1x_2\, b^2\, y_{1\eta_1} y_{2\eta_1}\, b_3^2a_3)\in a_1^+b_1b_1^+a_2^+b_2b_2^+\,x_1^+x_2^+\,bb^+\,\mathtt y_\eta \,b_3b_3^+a_3^+,
\]
where 
\begin{equation}
\label{y_eta}
\mathtt y_\eta:=
\begin{cases} 
y_1^+y_2^+ & \text{if $\eta=\varepsilon$}, \\
\overline{y_1^+y_2^+} & \text{if $\eta=\upsilon$}.
\end{cases} 
\end{equation}
Now we apply~($\ast$) again, yielding that $\mathtt y_\eta=y_{1\eta}^+y_{2\eta}^+$ and
\[
\begin{aligned}
&\phi(b_1)\in b_1b_1^+,\  \phi(a_2)\in a_2^+,\ \phi(b_2)\in b_2b_2^+,\ \phi(b)\in bb^+,\\
&\phi(y_{1\eta_1})=y_{1\eta}^+,\  \phi(y_{2\eta_1})\in y_{2\eta}^+,\ \phi(b_3)\in b_3b_3^+.
\end{aligned}
\] 
Finally, since $\mathbf v_{\xi_1,\eta_1}(y_1,y_2,t)\in y_{1\eta_1}^+y_{2\eta_1}^+ty_1^+y_2^+$, we have $\mathbf u(y_1,y_2,t)\in y_{1\eta_1}^+y_{2\eta_1}^+ty_1^+y_2^+$.
But $\mathbf u(y_1,y_2,t)\in y_{1\eta}^+y_{2\eta}^+ty_1^+y_2^+$.
This implies that $\eta_1=\eta$.
Thus, $\mathbf u\in \mathtt v_{\xi_1,\eta_1}$ and $\mathbf v\in \mathtt v_{\xi_2,\eta_2}$.

Suppose that~(b) holds.
Then
\[
\phi(d_1^2 a_2d_2^2\cdots d_{n-1}^2a_4d_n^2)\in c_1c_1^+ a_1^+c_2c_2^+\cdots c_{n-1}c_{n-1}^+a_3^+c_nc_n^+.
\]
It follows from~($\ast$) that $\phi(a_2)\in a_1^+$, $\phi(a_4)\in a_3^+$ and $\phi(d_i^2)\in c_ic_i^+$ for any $i=1,\dots,n$.
Then 
\[
\phi(a_2b_2^2\, x_{1\xi_1}x_{2\xi_1}\, b^2\, y_1 y_2\, b_3^2a_3 b_4^2a_4)\in a_1^+b_1b_1^+a_2^+b_2b_2^+\,x_1^+x_2^+\,bb^+\,\mathtt y_\eta \,b_3b_3^+a_3^+,
\]
where $\mathtt y_\eta$ is defined by the formula~\eqref{y_eta}.
Now we apply~($\ast$) again and obtain that $\phi(y_2)\in bb^+$ contradicting~(b).
Therefore,~(b) is impossible.
\end{proof}

One can return to the proof of Proposition~\ref{P: non-mod L(M([c_k]^gamma))}.
Let 
\[
\begin{aligned}
&\mathbf X := \mathbf M_\gamma(\mathtt c_n)\wedge\var\{\mathbf v_{\varepsilon,\varepsilon} \approx \mathbf v_{\upsilon,\varepsilon},\,\mathbf v_{\varepsilon,\upsilon} \approx \mathbf v_{\upsilon,\upsilon}\},\\
&\mathbf Y := \mathbf M_\gamma(\mathtt c_n)\wedge\var\{\mathbf v_{\upsilon,\varepsilon} \approx \mathbf v_{\upsilon,\upsilon}\},\\
&\mathbf Z := \mathbf M_\gamma(\mathtt c_n)\wedge\var\{\mathbf v_{\varepsilon,\varepsilon} \approx \mathbf v_{\varepsilon,\upsilon},\,\mathbf v_{\upsilon,\varepsilon} \approx \mathbf v_{\upsilon,\upsilon}\}.
\end{aligned}
\]
Consider an identity $\mathbf u \approx \mathbf u^\prime$ of $\mathbf X$ with $\mathbf u\in \mathtt v_{\varepsilon,\varepsilon}\cup\mathtt v_{\upsilon,\varepsilon}$.
We are going to show that $\mathbf u^\prime \in \mathtt v_{\varepsilon,\varepsilon}\cup\mathtt v_{\upsilon,\varepsilon}$.
In view of Proposition~\ref{P: deduction}, we may assume without loss of generality that  either $\mathbf u \approx \mathbf u^\prime$ holds in $\mathbf M_\gamma(\mathtt c_n)$ or $\mathbf u \approx \mathbf u^\prime$ is directly deducible from $\mathbf v_{\varepsilon,\varepsilon} \approx \mathbf v_{\upsilon,\varepsilon}$ or $\mathbf v_{\varepsilon,\upsilon} \approx \mathbf v_{\upsilon,\upsilon}$. 
According to Lemma~\ref{L: v_{xi,eta}}, $\mathbf u \approx \mathbf u^\prime$ cannot be directly deducible from $\mathbf v_{\varepsilon,\upsilon} \approx \mathbf v_{\upsilon,\upsilon}$ and if $\mathbf u \approx \mathbf u^\prime$ is directly deducible from $\mathbf v_{\varepsilon,\varepsilon} \approx \mathbf v_{\upsilon,\varepsilon}$, then $\mathbf u^\prime\in \mathtt v_{\varepsilon,\varepsilon}\cup\mathtt v_{\upsilon,\varepsilon}$.
Therefore, it remains to consider the case when $\mathbf u \approx \mathbf u^\prime$ is satisfied by $\mathbf M_\gamma(\mathtt c_n)$.
In this case, it follows from Lemma~\ref{L: V is a set of classes} that $\mathbf u^\prime\in\mathtt v_{\varepsilon,\varepsilon}\cup\mathtt v_{\upsilon,\varepsilon}\cup\mathtt v_{\varepsilon,\upsilon}\cup\mathtt v_{\upsilon,\upsilon}$.
Further, it is routine to check that $\mathbf M_\gamma(\mathtt c_p)$ satisfies the identities
\begin{equation}
\label{xyxy=xyyx=yxyx=yxxy}
(xy)^2\approx xy^2x\approx (yx)^2\approx yx^2y
\end{equation}
and~\eqref{yx.. = yxyx..} for any $r\in\mathbb N$.
If $\mathbf u^\prime\in\mathtt v_{\varepsilon,\upsilon}\cup\mathtt v_{\upsilon,\upsilon}$, then the identity 
\[
\mathbf u(y_1,y_2,t,d_1,\dots, d_n)\approx \mathbf u^\prime(y_1,y_2,t,d_1,\dots, d_n)
\] 
is equivalent modulo~\eqref{yx.. = yxyx..} and~\eqref{xyxy=xyyx=yxyx=yxxy} to the identity $\mathbf c_n\approx \mathbf c_n^\prime$.
But this is impossible because $\mathbf M_\gamma(\mathtt c_p)$ violates the latter identity by Lemma~\ref{L: M_gamma(W) in V}.
We see that $\mathbf u^\prime\in \mathtt v_{\varepsilon,\varepsilon}\cup\mathtt v_{\upsilon,\varepsilon}$ in either case and, therefore, the set $\mathtt v_{\varepsilon,\varepsilon}\cup\mathtt v_{\upsilon,\varepsilon}$ is stable with respect to $\mathbf X$.
By similar arguments, we can show that the sets $\mathtt v_{\varepsilon,\varepsilon}$ and $\mathtt v_{\varepsilon,\varepsilon}\cup\mathtt v_{\varepsilon,\upsilon}$ are stable with respect to $\mathbf Y$ and $\mathbf Z$, respectively.
This implies that the $\gamma$-class $\mathtt v_{\varepsilon,\varepsilon}$ is stable with respect to $(\mathbf X\vee \mathbf Z)\wedge \mathbf Y$.
Clearly, both $\mathbf X\wedge \mathbf Y$ and $\mathbf Z$ satisfy the identity $\mathbf v_{\varepsilon,\varepsilon}\approx \mathbf v_{\varepsilon,\upsilon}$.
Therefore, the $\gamma$-class $\mathtt v_{\varepsilon,\varepsilon}$ is not stable with respect to $(\mathbf X\wedge \mathbf Y)\vee\mathbf Z$.
Since $\mathbf Z\subseteq\mathbf Y$, we have
\[
(\mathbf X\wedge \mathbf Y)\vee\mathbf Z\subset (\mathbf X\vee \mathbf Z)\wedge \mathbf Y.
\]
It follows that the lattice $\mathfrak L\left(\mathbf M_\gamma(\mathtt c_n)\right)$ is not modular.
A similar argument works for the case of $\mathfrak L\left(\mathbf M_\gamma(\mathtt c_n^\prime)\right)$.
\end{proof}

\section{Proof of Theorem~\ref{T: main}}
\label{Sec: proof}

\textbf{Necessity}. 
Let $\mathbf V$ be a distributive variety of aperiodic monoids.
Then $\mathbf V$ satisfies the identity $x^n\approx x^{n+1}$ for some $n\in\mathbb N$.
If $n=1$, then $\mathbf V\subseteq\mathbf B$ and we are done.
Let now $n>1$ and $\mathbf V$ is not a variety of bands.
Then $M(x)\in\mathbf V$ by~\cite[Lemma~2.5]{Gusev-Vernikov-18}.
According to~\cite[Proposition~4.1]{Lee-12}, the lattice $\mathfrak L(\mathbf M(x)\vee \mathbf{LRB})$ [respectively, $\mathfrak L(\mathbf M(x)\vee \mathbf{RRB})$] is not modular, where $\mathbf{LRB}$ [respectively, $\mathbf{RRB}$] is the variety generated by the 2-element left-zero [respectively, right-zero] semigroup with a new identity element adjoined.
Hence $\mathbf{LRB},\mathbf{RRB}\nsubseteq\mathbf V$.
It follows from Proposition~10.10.2c) in~\cite{Almeida-94}, its proof and the dual statement that $\mathbf V$ satisfies the identities $(x^ny^n)^n\approx (x^ny^n)^n x^n \approx (y^nx^n)^n$.
Further, if $M_\gamma(xx^+yy^+)\notin \mathbf V$, then, by Lemma~\ref{L: nsub M(xx^+yy^+)}, the identity $x^ny^n\approx (x^ny^n)^n$ and so the identity $x^ny^n\approx y^nx^n$ hold in $\mathbf V$.
In this case, $\mathbf V$ is a variety of aperiodic monoids with commuting idempotents and, therefore, we can apply Proposition~\ref{P: Acom}.
So, we may further assume that $M_\gamma(xx^+yy^+)\in \mathbf V$.

In view of~\cite[Lemma~2]{Gusev-20b} and~\cite[Theorem~4.1(i)]{Sapir-21}, the lattice $\mathfrak L\left(\mathbf M(x^2)\vee\mathbf M_\gamma(yxx^+)\right)$ is not modular.
Since the monoid $M_\gamma(yxx^+)$ lies in the variety $\mathbf M_\gamma(xx^+yy^+)$ by Lemma~\ref{L: L(M(x^+tyy^+x^+,x^+yy^+tx^+))}, this implies that $M(x^2)\notin \mathbf V$.
Using Lemma~\ref{L: M(W) in V}, one can easily deduce from this that $\mathbf V$ satisfies $x^2\approx x^3$.
According to Proposition~\ref{P: non-mod M(xyx) vee M(xx^+yy^+)}, $M(xyx)\notin \mathbf V$.
Hence the identity~\eqref{xyx=xyxx} holds in $\mathbf V$ by~\cite[Corollary~6.8(i)]{Gusev-24}.
Combining Lemma~4.4 in~\cite{Gusev-24}, Proposition~6.5 in~\cite{Gusev-Sapir-22} and the dual statements, we yield that the identities $xyx^2\approx x^2yx^2$, $x^2yx\approx x^2yx^2$ are satisfied by $\mathbf V$.
The latter three identities together with $(x^ny^n)^n\approx (y^nx^n)^n$ imply $\{\eqref{xyx=xxyx},\,\eqref{xyxy=yxyx}\}$.
Now we apply Lemma~\ref{L: nsub M(c_p),M(c_p')}, Proposition~\ref{P: non-mod L(M([c_k]^gamma))} and the dual statements, yielding that $\mathbf V\subseteq\mathbf D_{15}$.

\medskip

\textbf{Sufficiency}.
By symmetry, it suffices to show that the varieties $\mathbf B$, $\mathbf D_1$--$\mathbf D_{15}$, $\mathbf P_n$, $\mathbf Q_n$ and $\mathbf R_n$ are distributive.
The variety $\mathbf B$ is distributive by~\cite[Proposition~4.7]{Wismath-86}.
In view of Proposition~\ref{P: Acom}, the varieties $\mathbf D_1$--$\mathbf D_{14}$, $\mathbf P_n$, $\mathbf Q_n$ and $\mathbf R_n$ also have this property.
So, it remains to prove the distributivity of $\mathbf D_{15}$.

In view of Lemma~\ref{L: nsub M(xx^+yy^+)}, a subvariety $\mathbf X$ of $\mathbf D_{15}$ satisfies $x^2y^2\approx (x^2y^2)^2$ and so $x^2y^2\approx (x^2y^2)^2 \approx (y^2x^2)^2\approx y^2x^2$ whenever $M(xx^+yy^+)\notin \mathbf X$.
In this case, $\mathbf X\subseteq\mathbf M_\gamma(x^+yzx^+)$ by~\cite[Theorem~4.3(i)]{Sapir-21} and~\cite[Proposition~4.3]{Lee-Li-11}.
Further, it follows from~\cite[Lemma~4.8]{Gusev-24} that a subvariety $\mathbf X$ of $\mathbf D_{15}$ satisfies the identity $xyzx\approx xyxzx$ whenever $\mathbf M_\gamma(x^+yzx^+)\notin \mathbf X$ and $M_\gamma(xx^+yy^+)\in\mathbf X$.
In this case, $\mathbf X=\mathbf M_\gamma(xx^+yy^+)$ by~\cite[Theorem~4.1(iv)]{Sapir-21} and~\cite[Proposition~3.2a)]{Edmunds-77}. 
We see that the lattice $\mathfrak L(\mathbf D_{15})$ is a union of the lattice $\mathfrak L(\mathbf M_\gamma(x^+yzx^+)\vee \mathbf M_\gamma(xx^+yy^+))$ and the interval $[\mathbf M_\gamma(x^+yzx^+)\vee \mathbf M_\gamma(xx^+yy^+),\mathbf V]$.

For brevity, we will denote by $\mathbf V\Sigma$ the subvariety of a variety $\mathbf V$ defined by a set $\Sigma$ of identities.
According to Lemma~\ref{L: L(M(x^+tyy^+x^+,x^+yy^+tx^+))}, it remains to verify that the interval $[\mathbf M_\gamma(x^+yzx^+)\vee \mathbf M_\gamma(xx^+yy^+), \mathbf D_{15}]$ is distributive.
By~\cite[Lemma~2.19]{Gusev-Vernikov-21}, it suffices to find a set $\Gamma$ of identities such that:
\begin{itemize}
\item[\textup{(i)}] if $\mathbf X\in[\mathbf M_\gamma(x^+yzx^+)\vee \mathbf M_\gamma(xx^+yy^+), \mathbf D_{15}]$, then $\mathbf X=\mathbf D_{15}\Gamma^\prime$ for some subset $\Gamma^\prime$ of $\Gamma$;
\item[\textup{(ii)}] if $\mathbf X,\mathbf Y\in[\mathbf M_\gamma(x^+yzx^+)\vee \mathbf M_\gamma(xx^+yy^+), \mathbf D_{15}]$ and $\mathbf X\wedge\mathbf Y$ satisfies an identity $\sigma\in\Gamma$, then $\sigma$ holds in either $\mathbf X$ or $\mathbf Y$.
\end{itemize}
We will show in Proposition~\ref{P: varieties [M(x^+yzx^+,xx^+yy^+),D_{15}]} and Lemma~\ref{L: two letters in a block} below that the set $\Gamma$ consisting of all identities of the from
\begin{equation}
\label{two letters in a block}
\biggl(\prod_{i=1}^k a_it_i\biggr) x^2y^2 \biggl(\prod_{i=k+1}^{k+\ell} t_i a_i\biggr)\approx\biggl(\prod_{i=1}^k a_it_i\biggr) (xy)^2  \biggl(\prod_{i=k+1}^{k+\ell} t_ia_i\biggr),
\end{equation} 
where $k,\ell\ge 0$ and $a_1,\dots,a_{k+\ell}\in\{1,x,y\}$, is relevant.
Namely, the claim~(i) is true by Proposition~\ref{P: varieties [M(x^+yzx^+,xx^+yy^+),D_{15}]}, while the claim~(ii) holds by Lemma~\ref{L: two letters in a block}.
Theorem~\ref{T: main} is thus proved.\qed

\begin{proposition}
\label{P: varieties [M(x^+yzx^+,xx^+yy^+),D_{15}]}
Each variety in the interval $[\mathbf M_\gamma(x^+yzx^+)\vee \mathbf M_\gamma(xx^+yy^+), \mathbf D_{15}]$ can be defined within $\mathbf D_{15}$ by the identities of the form~\eqref{two letters in a block}, where $k,\ell\ge 0$ and $a_1,\dots,a_{k+\ell}\in\{1,x,y\}$.
\end{proposition}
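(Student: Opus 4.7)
The plan is to set $\Sigma_{\mathbf X}$ equal to the collection of all identities of the form~\eqref{two letters in a block} which are satisfied by $\mathbf X$, and then prove $\mathbf X=\mathbf D_{15}\Sigma_{\mathbf X}$. The inclusion $\mathbf X\subseteq\mathbf D_{15}\Sigma_{\mathbf X}$ is immediate from the definition. For the reverse inclusion, by Proposition~\ref{P: deduction} it suffices to check that every identity $\mathbf u\approx\mathbf v$ of $\mathbf X$ is deducible from the defining identities of $\mathbf D_{15}$ together with the identities in $\Sigma_{\mathbf X}$.

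Let $\mathbf u\approx\mathbf v$ be a non-trivial identity of $\mathbf X$. Since $\mathbf M_\gamma(x^+yzx^+)\subseteq\mathbf X$, Lemma~\ref{L: identities of M(x^+yzx^+)} furnishes a common block decomposition $\mathbf u=\mathbf u_0t_1\mathbf u_1\cdots t_m\mathbf u_m$ and $\mathbf v=\mathbf v_0t_1\mathbf v_1\cdots t_m\mathbf v_m$ with $\alf(\mathbf u_i)=\alf(\mathbf v_i)$ for $i=0,\ldots,m$. Moreover, by Lemma~\ref{L: M_gamma(W) in V}, the $\gamma$-class $xx^+yy^+$ is stable with respect to $\mathbf X$. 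The next step is to use the defining identities of $\mathbf D_{15}$---in particular $xyx\approx x^2yx\approx xyx^2$ (which entails $x^2\approx x^3$), $(xy)^2\approx(yx)^2$, and the family $\Phi_1$---to normalize $\mathbf u$ and $\mathbf v$ block by block, obtaining canonical forms $\widetilde{\mathbf u}$ and $\widetilde{\mathbf v}$. The canonical form for each block records: the sequence of first occurrences of multiple letters (made canonical by the appropriate instances of $\Phi_1$), the sequence of last occurrences (dually), and an ``inner portion'' in which, for every pair $x,y$ of multiple letters both confined to that block, one of the two skeletons $x^2y^2$ or $(xy)^2$ appears.

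After normalization, any residual discrepancy between $\widetilde{\mathbf u}$ and $\widetilde{\mathbf v}$ must consist of local exchanges of an $x^2y^2$-skeleton and an $(xy)^2$-skeleton inside some block, with the surrounding multiplicities and adjacencies fixed. Read globally, each such exchange is exactly an identity of the form~\eqref{two letters in a block}: the parameter $k+\ell$ equals the number of simple letters, and each $a_i\in\{1,x,y\}$ records whether, in the normalized form, the multiple letter $x$, the multiple letter $y$, or neither appears adjacent to the simple letter $t_i$. Since this exchange is a consequence of $\mathbf u\approx\mathbf v$ modulo $\mathbf D_{15}$-identities, the resulting identity holds in $\mathbf X$ and thus lies in $\Sigma_{\mathbf X}$. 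Composing the normalizing $\mathbf D_{15}$-steps with the intermediate $\Sigma_{\mathbf X}$-swaps produces a deduction of $\mathbf u\approx\mathbf v$ from $\mathbf D_{15}\cup\Sigma_{\mathbf X}$, as desired.

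The main obstacle is the normalization step: one must verify that the listed $\mathbf D_{15}$-identities indeed reduce every block to the described canonical shape, and that two normal forms can differ only through the swaps captured by~\eqref{two letters in a block}. This reduces to a detailed case analysis in which the rearrangements of first and last occurrences of multiple letters are controlled by appropriate instances of $\Phi_1$, while the stability of $xx^+yy^+$ under $\mathbf X$ (together with the remaining axioms of $\mathbf D_{15}$) blocks all other simplifications. The technical difficulty is compounded when several multiple letters share a block and interact both with one another and with the surrounding simple letters; here one has to track carefully how the $a_i\in\{1,x,y\}$ propagate during the normalization.
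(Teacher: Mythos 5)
Your proposal follows the same broad strategy as the paper's proof (restrict attention to identities respecting the block decomposition supplied by Lemma~\ref{L: identities of M(x^+yzx^+)}, normalize within $\mathbf D_{15}$, and reduce the remaining discrepancies to identities of the form~\eqref{two letters in a block}), but it stops exactly where the actual proof begins. You write that ``the main obstacle is the normalization step: one must verify that the listed $\mathbf D_{15}$-identities indeed reduce every block to the described canonical shape, and that two normal forms can differ only through the swaps captured by~\eqref{two letters in a block}'' --- that verification \emph{is} the proof, and you have not supplied it. As written, the claim that ``any residual discrepancy \dots must consist of local exchanges of an $x^2y^2$-skeleton and an $(xy)^2$-skeleton'' is an unsubstantiated assertion, and it is not even clear that a canonical form of the shape you describe exists.

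Concretely, three ingredients are missing. First, the reduction to the situation where every multiple letter occurs exactly twice inside its block; this requires an identity of the shape $\mathbf px\mathbf q_1\mathbf q_2x\mathbf r\approx\mathbf px\mathbf q_1x\mathbf q_2x\mathbf r$ to hold in $\mathbf D_{15}$ (Lemma~\ref{L: from pxx_1..x_kxr to px_1x..x_kxr}), which in turn must be deduced from $\Phi_1$ and $xyx\approx x^2yx\approx xyx^2$. Second, rather than a global normal form, one needs a way to measure the distance between the two sides; the paper does this via $r$-invertibility (decomposing the difference into adjacent transpositions of single occurrences) and induction on $r$. Third, and most importantly, for each elementary swap of the $p$th occurrence of $x$ with the $q$th occurrence of $y$ one must do a case analysis on $(p,q)$: when $p=q$, or when $x$ and $y$ share a second block, the swap is already a consequence of $\mathbf D_{15}$ alone (via Lemma~\ref{L: from pxyqxrys to pyxqxrys} and its dual), and only the case $\{p,q\}=\{1,2\}$ with $x,y$ not cohabiting any other block produces a genuinely new identity. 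Even then, the restricted identity $\mathbf h\,x^2y^2\,\mathbf t\approx\mathbf h\,\mathbf p\,\mathbf t$ has five a priori possible right-hand sides $\mathbf p\in\{(xy)^2,xy^2x,(yx)^2,yx^2y,y^2x^2\}$, and one needs the identities~\eqref{xyxy=xyyx=yxyx=yxxy} of $\mathbf D_{15}$, plus the computation $\mathbf h\,x^2y^2\,\mathbf t\approx\mathbf h\,(x^2y^2)^2\,\mathbf t\approx\mathbf h\,(xy)^2\,\mathbf t$ in the presence of $\mathbf h\,x^2y^2\,\mathbf t\approx\mathbf h\,y^2x^2\,\mathbf t$, to collapse all of them to the single form~\eqref{two letters in a block}. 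None of this appears in your argument, so the proposal is a plausible plan rather than a proof.
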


To prove Proposition~\ref{P: varieties [M(x^+yzx^+,xx^+yy^+),D_{15}]}, we need two auxiliary results.

\begin{lemma}
\label{L: from pxyqxrys to pyxqxrys}
If $\mathbf w:=\mathbf pxy\mathbf qx\mathbf ry\mathbf s$ and $\alf(\mathbf r)\subseteq\mul(\mathbf w)$, then $\mathbf D_{15}$ satisfies the identity $\mathbf w\approx\mathbf pyx\mathbf qx\mathbf ry\mathbf s$.
\end{lemma}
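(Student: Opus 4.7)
The plan is to combine one of the $\Phi_1$-identities $\mathbf{c}_{n,m,k}[\rho]\approx\mathbf{c}_{n,m,k}^\prime[\rho]$ (tailored precisely to swap the first occurrences of $x$ and $y$ in a word of a rigid shape) with the ``$\gamma$-class'' identities $xyx\approx x^2yx\approx xyx^2$ of $\mathbf{D}_{15}$; the latter let one freely change the exponent at any occurrence of a multiple letter, provided another occurrence remains to anchor the change, so any two $\gamma$-equivalent words are already $\mathbf{D}_{15}$-equivalent.

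Writing $\mathbf{r}=r_1r_2\cdots r_L$, the hypothesis $\alf(\mathbf{r})\subseteq\mul(\mathbf{w})$ gives each $r_i$ a ``partner'' occurrence somewhere else in $\mathbf{w}$ --- in $\mathbf{p}$, $\mathbf{q}$, $\mathbf{s}$, at one of the explicit $x$, $y$, or at another position inside $\mathbf{r}$ itself. I would choose $n,m,k$ and a substitution $\phi$ as follows. Set $\phi(x)=x$, $\phi(y)=y$; each $r_i$ whose partner lies in $\mathbf{p}$ (resp.\ $\mathbf{q}$, $\mathbf{s}$) is encoded by some $z_j$ with $j\le n$ (resp.\ $n<j\le n+m$, $n+m<j\le n+m+k$) with $\phi(z_j)=r_i$, so that the outside occurrence of $z_j$ lands exactly on the chosen partner; each $r_i$ whose partner lies inside $\mathbf{r}$ is encoded by a $y_i$-slot with $\phi(y_i)=r_i$, so that the $y_i^2$-factor becomes $r_i^2$, which is $\gamma$-equivalent to a single $r_i$ since $r_i$ is multiple. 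Unused $z_j$'s and $y_i$'s are sent to $1$; the simple letters $t,t_j$ absorb the remaining pieces of $\mathbf{p}$, $\mathbf{q}$, $\mathbf{s}$ between chosen partner positions; the ``framing'' words $\mathbf{a}$ and $\mathbf{b}$ pick up the leftmost and rightmost remainders; and $\rho\in S_{n+m+k}$ is chosen so that under $\phi$ the middle block of $\mathbf{c}_{n,m,k}[\rho]$ spells out $\mathbf{r}$ in the correct linear order.

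With the setup in place, $\mathbf{a}\phi(\mathbf{c}_{n,m,k}[\rho])\mathbf{b}$ agrees with $\mathbf{w}$ up to the exponents of its multiple letters, hence lies in the same $\gamma$-class and so is $\mathbf{D}_{15}$-equivalent to $\mathbf{w}$. Applying the $\Phi_1$-identity yields $\mathbf{a}\phi(\mathbf{c}_{n,m,k}^\prime[\rho])\mathbf{b}$, and a symmetric round of exponent adjustments brings this word to $\mathbf{p}yx\mathbf{q}x\mathbf{r}y\mathbf{s}$. The main obstacle I expect is the combinatorial bookkeeping for the partner assignment --- particularly when $r_i\in\{x,y\}$ (the partner must then be one of the explicit $x$ or $y$ of $\mathbf{w}$, so the image of the relevant $z_j$ has to be threaded through the template without duplicating the explicit letters), and when a letter $v\in\alf(\mathbf{r})$ occurs only inside $\mathbf{r}$, with multiplicity greater than $2$, forcing one $y_i$-slot per such occurrence to produce a $v^2$-block at the correct non-adjacent position. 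Both edge cases still go through, because $\gamma$-equivalence is insensitive to the exact exponent within a letter-block and because we may always pad $n, m, k$ with extra slots sent to $1$.
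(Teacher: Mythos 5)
Your proposal is correct and follows essentially the same route as the paper: adjust exponents with $xyx\approx x^2yx\approx xyx^2$ and then apply a single $\Phi_1$-identity $\mathbf c_{n,m,k}[\rho]\approx\mathbf c_{n,m,k}^\prime[\rho]$. The paper sidesteps all of your partner-assignment bookkeeping by first squaring \emph{every} letter of $\mathbf r$ (legal since $\alf(\mathbf r)\subseteq\mul(\mathbf w)$), so that the resulting block $\mathbf r^\prime$ is matched entirely by the $y_i^2$-slots of the template with all $z_j$ sent to the empty word, and no case analysis on where the partner occurrences lie is needed.
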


\begin{proof}
The variety $\mathbf D_{15}$ satisfies the identities
\[
\mathbf w\stackrel{\{\eqref{xyx=xyxx},\,\eqref{xyx=xxyx}\}}\approx\mathbf pxy\mathbf qx\mathbf r^\prime y\mathbf s\stackrel{\Phi_1}\approx\mathbf pyx\mathbf qx\mathbf r^\prime y\mathbf s\stackrel{\{\eqref{xyx=xyxx},\,\eqref{xyx=xxyx}\}}\approx\mathbf pyx\mathbf qx\mathbf r^\prime y\mathbf s,
\]
where $\mathbf r^\prime$ is the word obtained from $\mathbf r$ by replacing each letter to its square.
\end{proof}

\begin{lemma}
\label{L: from pxx_1..x_kxr to px_1x..x_kxr}
If $\mathbf w:=\mathbf px\mathbf q_1\mathbf q_2x\mathbf r$ and $\alf(\mathbf q_1\mathbf q_2)\subseteq\mul(\mathbf w)$, then $\mathbf D_{15}$ satisfies the identity $\mathbf w\approx\mathbf px\mathbf q_1x\mathbf q_2x\mathbf r$.
\end{lemma}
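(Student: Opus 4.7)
The plan is to proceed by induction on $|\mathbf{q}_2|$, using the previously established Lemma~\ref{L: from pxyqxrys to pyxqxrys} as the principal swap tool. In the base case $\mathbf{q}_2 = 1$, the target identity $\mathbf{p} x \mathbf{q}_1 x \mathbf{r} \approx \mathbf{p} x \mathbf{q}_1 x^2 \mathbf{r}$ is immediate by substituting $\mathbf{q}_1$ for $y$ in~\eqref{xyx=xyxx}, which is among the defining identities of $\mathbf{D}_{15}$. For the inductive step, write $\mathbf{q}_2 = y \mathbf{q}_2''$ where $y \in \mul(\mathbf{w})$, and apply the induction hypothesis to the refined splitting with left piece $\mathbf{q}_1 y$ and right piece $\mathbf{q}_2''$ (the inclusion $\alf(\mathbf{q}_1 y\cdot \mathbf{q}_2'') \subseteq \mul(\mathbf{w})$ is inherited). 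This yields
\[
\mathbf{p} x \mathbf{q}_1 y \mathbf{q}_2'' x \mathbf{r} \approx \mathbf{p} x \mathbf{q}_1 y x \mathbf{q}_2'' x \mathbf{r},
\]
so the problem is reduced to swapping the central $yx$ with $xy$ to reach the desired $\mathbf{p} x \mathbf{q}_1 x y \mathbf{q}_2'' x \mathbf{r}$.

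For this swap I would invoke Lemma~\ref{L: from pxyqxrys to pyxqxrys} with the roles of the letter variables $x$ and $y$ interchanged: it swaps $yx$ to $xy$ when the pattern extends to the right as $\ldots yx\ldots y\ldots x\ldots$ with only multiple letters in the middle block. If $y$ reoccurs inside $\mathbf{q}_2''$, the explicit $x$ between $\mathbf{q}_2''$ and $\mathbf{r}$ completes the required pattern, and the inserted middle block lies within $\mathbf{q}_2$, so its alphabet is in $\mul(\mathbf{w})$ by hypothesis. The main obstacle will be the corner cases where the auxiliary occurrence of $y$ is unavailable on the right (for instance, when the other occurrences of $y$ lie entirely in $\mathbf{p}$ or $\mathbf{q}_1$, or in $\mathbf{r}$ but with a simple letter blocking the path to a trailing $x$). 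In such configurations I would run the symmetric induction peeling the \emph{last} letter off $\mathbf{q}_1$ and apply Lemma~\ref{L: from pxyqxrys to pyxqxrys} in its original direction, so that the two induction directions together cover all possible positions of the auxiliary letter; any residual case that evades both directions should be cleared by a direct substitution into an identity from $\Phi_1$, following the same ``square each multiple letter, then apply $\Phi_1$'' pattern used in the proof of Lemma~\ref{L: from pxyqxrys to pyxqxrys}.
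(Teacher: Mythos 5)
Your induction is set up sensibly and the base case is fine, but the proof collapses at exactly the hard point: the swap of the central pair. Lemma~\ref{L: from pxyqxrys to pyxqxrys} converts $xy$ into $yx$ only when there is a \emph{later} occurrence of $x$ followed, across a block of multiple letters, by a \emph{later} occurrence of $y$; its dual needs the mirror configuration on the left. The hypothesis $\alf(\mathbf q_1\mathbf q_2)\subseteq\mul(\mathbf w)$ guarantees only that each letter recurs \emph{somewhere}, possibly deep inside $\mathbf p$ or $\mathbf r$ behind simple letters, so neither configuration need be present. Concretely, take $\mathbf w=yt_1zt_2\,xyzx\,t_3$ with $\mathbf p=yt_1zt_2$, $\mathbf q_1=y$, $\mathbf q_2=z$, $\mathbf r=t_3$. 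Peeling $z$ off $\mathbf q_2$ reduces you to turning $zx$ into $xz$ inside $yt_1zt_2\,xy\,zx\,xt_3$: there is no later occurrence of $z$, and the earlier occurrence of $z$ is separated from every occurrence of $x$ by the simple letter $t_2$, so both the lemma and its dual are inapplicable. The symmetric peeling of $y$ off $\mathbf q_1$ fails identically because of $t_1$. Hence the claim that ``the two induction directions together cover all possible positions of the auxiliary letter'' is false, and the entire burden lands on the unspecified ``direct substitution into an identity from $\Phi_1$'' --- which is precisely the content of the lemma being proved. Note also that in such configurations no single $\mathbf c$- or $\mathbf d$-identity of $\Phi_1$ can swap that particular adjacent pair, since those identities also demand suitably placed companion occurrences of \emph{both} swapped letters; the needed derivation does not factor into adjacent-letter swaps at all.

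The missing idea is the normalization that makes one application of $\Phi_1$ suffice globally. Using \eqref{xyx=xyxx} and \eqref{xyx=xxyx}, square the first displayed $x$ and every occurrence of every letter of $\mathbf q_1$ (legitimate because all of these letters are multiple in $\mathbf w$), obtaining $\mathbf p x^2\mathbf q_1'\mathbf q_2x\mathbf r$ where $\mathbf q_1'$ is $\mathbf q_1$ with each letter squared. The block $\mathbf q_1'$ now has exactly the rigid shape (squares, and letters whose companion occurrences sit elsewhere) that the identities of $\Phi_1$ require of the block being crossed; a single application of $\Phi_1$ gives $\mathbf p x\mathbf q_1'x\mathbf q_2x\mathbf r$, and \eqref{xyx=xyxx}, \eqref{xyx=xxyx} strip the extra exponents to yield $\mathbf p x\mathbf q_1 x\mathbf q_2x\mathbf r$. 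This three-step chain is the paper's entire proof; once you adopt it, the induction on $|\mathbf q_2|$ is superfluous.
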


\begin{proof}
In view of Lemma~\ref{L: from pxyqxrys to pyxqxrys} and the dual statement,  the variety $\mathbf D_{15}$ satisfies the identities
\[
\mathbf w\stackrel{\{\eqref{xyx=xyxx},\,\eqref{xyx=xxyx}\}}\approx\mathbf px^2\mathbf q_1^\prime \mathbf q_2x\mathbf r\stackrel{\Phi_1}\approx\mathbf px\mathbf q_1^\prime x \mathbf q_2x\mathbf r\stackrel{\{\eqref{xyx=xyxx},\,\eqref{xyx=xxyx}\}}\approx\mathbf px\mathbf q_1x\mathbf q_2x\mathbf r,
\]
where $\mathbf q_1^\prime$ is the word obtained from $\mathbf q_1$ by replacing each letter to its square.
\end{proof}

\begin{proof}[Proof of Proposition~\ref{P: varieties [M(x^+yzx^+,xx^+yy^+),D_{15}]}]
Recall that $\Gamma$ denotes the set of all identities of the form~\eqref{two letters in a block}, where $k,\ell\ge 0$ and $a_1,\dots,a_{k+\ell}\in\{1,x,y\}$.
Let $\mathbf V\in[\mathbf M_\gamma(x^+yzx^+)\vee \mathbf M_\gamma(xx^+yy^+), \mathbf D_{15}]$.
Take an arbitrary identity $\mathbf u \approx \mathbf u^\prime$ of $\mathbf V$.
We need to verify that $\mathbf u \approx \mathbf u^\prime$ is equivalent within $\mathbf D_{15}$ to some subset of $\Gamma$.

Let $\mathbf u_0t_1\mathbf u_1\cdots t_m\mathbf u_m$ be the decomposition of $\mathbf u$. 
Lemma~\ref{L: identities of M(x^+yzx^+)} implies that the decomposition of $\mathbf u^\prime$ has the form $\mathbf u_0^\prime t_1\mathbf u_1^\prime\cdots t_m\mathbf u_m^\prime$ and $\alf(\mathbf u_i)=\alf(\mathbf u_i^\prime)$, $i=0,\dots,m$.
Taking into account Lemma~\ref{L: from pxx_1..x_kxr to px_1x..x_kxr}, we can use the identities of $\mathbf D_{15}$ to convert the words $\mathbf u$ and $\mathbf u^\prime$ into some words $\mathbf w$ and $\mathbf w^\prime$, respectively, such that the following hold:
\begin{itemize}
\item the decompositions of the words $\mathbf w$ and $\mathbf w^\prime$ are of the form $\mathbf w_0t_1\mathbf w_1\cdots t_m\mathbf w_m$ and $\mathbf w_0^\prime t_1\mathbf w_1^\prime\cdots t_m\mathbf w_m^\prime$, respectively;
\item $\occ_x(\mathbf w_i)=\occ_x(\mathbf w_i^\prime)=2$ for any $x\in\alf(\mathbf w_i\mathbf w_i^\prime)$ and $i=0,\dots,m$.
\end{itemize}
Thus, it suffices to show that the identity $\mathbf w \approx \mathbf w^\prime$ is equivalent within $\mathbf D_{15}$ to some subset of $\Gamma$.

We call an identity $\mathbf c\approx\mathbf d$ 1-\textit{invertible} if $\mathbf c=\mathbf e^\prime\, xy\,\mathbf e^{\prime\prime}$ and $\mathbf d=\mathbf e^\prime\, yx\,\mathbf e^{\prime\prime}$ for some words $\mathbf e^\prime,\mathbf e^{\prime\prime}\in\mathfrak X^\ast$ and letters $x,y\in\alf(\mathbf e^\prime\mathbf e^{\prime\prime})$. 
Let $j>1$. 
An identity $\mathbf c\approx\mathbf d$ is called $j$-\textit{invertible} if there is a sequence of words $\mathbf c=\mathbf w_0,\dots,\mathbf w_j=\mathbf d$ such that the identity $\mathbf w_i\approx\mathbf w_{i+1}$ is 1-invertible for each $i=0,\dots,j-1$ and $j$ is the least number with such a property. 
For convenience, we will call the trivial identity 0-\textit{invertible}. 

Notice that the identity $\mathbf w \approx \mathbf w^\prime$ is $r$-invertible for some $r\ge 0$ because $\occ_x(\mathbf w_i)=\occ_x(\mathbf w_i^\prime)$ for any $x\in\mathfrak X$ and $i=0,\dots,m$. 
We will use induction by $r$.

\smallskip

\textbf{Induction base}. 
If $r=0$, then $\mathbf w=\mathbf w^\prime$, whence $\mathbf D_{15}\{\mathbf w\approx\mathbf w^\prime\}=\mathbf D_{15}\{\varnothing\}$.

\smallskip

\textbf{Induction step}. 
Let $r>0$. 
Obviously, $\mathbf w_s\ne\mathbf w_s^\prime$ for some $s\in\{0,\dots,m\}$. 
Then there are letters $x$ and $y$ such that, for some $p,q\in\{1,2\}$,
\begin{itemize}
\item the $p$th occurrence of $x$ precedes the $q$th occurrence of $y$ in $\mathbf w_s$;
\item there are no other letters between these two occurrences of $x$ and $y$ in $\mathbf w_s$;
\item the $q$th occurrence of $y$ precedes the $p$th occurrence of $x$ in $\mathbf w_s^\prime$.
\end{itemize} 
In this case, $\mathbf w_s=\mathbf a_s\, xy\,\mathbf b_s$ for some $\mathbf a_s,\mathbf b_s\in\mathfrak X^\ast$.
We denote by $\hat{\mathbf w}$ the word obtained from $\mathbf w$ by swapping of the $p$th occurrence of $x$ and the $q$th occurrence of $y$ in the block $\mathbf w_s$. 
In other words, $\hat{\mathbf w}:=\mathbf a\,yx\,\mathbf b$, where
\[
\mathbf a:=\biggl(\prod_{i=1}^s \mathbf w_{i-1}t_i\biggr)\mathbf a_s\ \text{ and }\ \mathbf b:=\mathbf b_s\biggl(\prod_{i=s+1}^m t_i\mathbf w_i\biggr).
\]

To complete the proof, it suffices to show that $\mathbf w \approx \hat{\mathbf w}$ holds in $\mathbf D_{15}\{\mathbf w \approx \mathbf w^\prime\}$ and $\mathbf D_{15}\{\mathbf w \approx \hat{\mathbf w}\}=\mathbf D_{15}\Gamma_1$ for some $\Gamma_1\subseteq\Gamma$.
Indeed, in this case, the identity $\hat{\mathbf w}\approx\mathbf w^\prime$ is \mbox{$(r-1)$}-invertible. 
By the induction assumption, $\mathbf D_{15}\{\hat{\mathbf w}\approx\mathbf w\}=\mathbf D_{15}\Gamma_2$ for some $\Gamma_2\subseteq\Gamma$, whence  
\[
\mathbf D_{15}\{\mathbf w\approx\mathbf w^\prime\}=\mathbf D_{15}\{\mathbf w\approx \hat{\mathbf w},\hat{\mathbf w}\approx \mathbf w^\prime\}=\mathbf D_{15}\{\Gamma_1,\,\Gamma_2\},
\] and we are done.

Suppose that $p=q=1$.
Then, since $\occ_x(\mathbf w_s)=\occ_x(\mathbf w_s^\prime)=\occ_y(\mathbf w_s)=\occ_y(\mathbf w_s^\prime)=2$, Lemma~\ref{L: from pxyqxrys to pyxqxrys} implies that $\mathbf D_{15}$ satisfies the identity $\mathbf w\approx\hat{\mathbf w}$.
By a similar argument, we can show that $\mathbf D_{15}$ satisfies the identity $\mathbf w\approx\hat{\mathbf w}$ whenever $p=q=2$.
So, it remains to consider the case when $\{p,q\}=\{1,2\}$.

If $x,y\in\alf(\mathbf w_{s^\prime})=\alf(\mathbf w_{s^\prime})$ for some $s^\prime\ne s$, then Lemma~\ref{L: from pxyqxrys to pyxqxrys} or the dual statement implies that $\mathbf D_{15}$ satisfies the identity $\mathbf w\approx\hat{\mathbf w}$. 
Thus, we may further assume that at most one of the letters $x$ and $y$ occurs in $\alf(\mathbf w_i)$ for any $i\ne s$.
Denote this letter by $a_i$ (if $x,y\notin\alf(\mathbf w_i)$, then $a_i$ denotes the empty word).
For brevity, put
\[
\mathbf h:=\prod_{i=1}^s a_{i-1}t_i\ \text{ and }\ \mathbf t:=\prod_{i=s+1}^m t_ia_i.
\]

Suppose that $(p,q)=(2,1)$.
Then $\mathbf w(x,y,t_1,\dots,t_m)\approx \mathbf w^\prime(x,y,t_1,\dots,t_m)$ is equivalent modulo $\{\eqref{xyx=xyxx},\,\eqref{xyx=xxyx}\}$ to $\mathbf h\,x^2y^2\,\mathbf t\approx \mathbf h\,\mathbf p\,\mathbf t$ for some $\mathbf p\in\{(xy)^2, xy^2x,(yx)^2,yx^2y,y^2x^2\}$.
The identities~\eqref{xyxy=xyyx=yxyx=yxxy} hold in $\mathbf D_{15}$, whence $\mathbf D_{15}\{\mathbf w\approx \mathbf w^\prime\}$ satisfies either $\mathbf h\,x^2y^2\,\mathbf t\approx \mathbf h\,(xy)^2\,\mathbf t$ or $\mathbf h\,x^2y^2\,\mathbf t\approx \mathbf h\,y^2x^2\,\mathbf t$.
Further, since the identities
\[
\mathbf h\, x^2y^2\, \mathbf t\stackrel{\eqref{xyx=xyxx}}\approx \mathbf h\, x^4y^4\, \mathbf t\approx \mathbf h\, (x^2y^2)^2\, \mathbf t\stackrel{\{\eqref{xyx=xyxx},\,\eqref{xyx=xxyx}\}}\approx \mathbf h\, (xy)^2\, \mathbf t
\]
hold in $\mathbf D_{15}\{\mathbf h\,x^2y^2\,\mathbf t\approx \mathbf h\,y^2x^2\,\mathbf t\}$, the identity $\mathbf h\,x^2y^2\,\mathbf t\approx \mathbf h\,(xy)^2\,\mathbf t$ is satisfied by $\mathbf D_{15}\{\mathbf w\approx \mathbf w^\prime\}$ in either case.
It is clear that the identity $\mathbf w(x,y,t_1,\dots,t_m)\approx \hat{\mathbf w}(x,y,t_1,\dots,t_m)$ is nothing but the identity $\mathbf h\,x^2y^2\,\mathbf t\approx \mathbf h\,(xy)^2\,\mathbf t$. 
The variety $\mathbf D_{15}\{\mathbf h\,x^2y^2\,\mathbf t\approx \mathbf h\,(xy)^2\,\mathbf t\}$ satisfies the identities
\[
\begin{aligned}
\mathbf w&=\mathbf a\, xy\,\mathbf b&&\\
&\approx \mathbf a\, x^2y^2\,\mathbf b&&\text{by $\{\eqref{xyx=xyxx},\,\eqref{xyx=xxyx}\}$}\\
&\approx \mathbf a\, (xy)^2\,\mathbf b&&\text{by $\mathbf h\,x^2y^2\,\mathbf t\approx \mathbf h\,(xy)^2\,\mathbf t$}\\
&\approx \mathbf a\, yx\,\mathbf b=\hat{\mathbf w}&&\text{by Lemma~\ref{L: from pxx_1..x_kxr to px_1x..x_kxr}}.
\end{aligned}
\]
Therefore, the identity $\mathbf h\,x^2y^2\,\mathbf t\approx \mathbf h\,(xy)^2\,\mathbf t$ is equivalent within $\mathbf D_{15}$ to the identity $\mathbf w\approx \hat{\mathbf w}$. 
It remains to notice that the identity $\mathbf h\,x^2y^2\,\mathbf t\approx \mathbf h\,(xy)^2\,\mathbf t$ belongs to $\Gamma$.

The case when $(p,q)=(1,2)$ is quite similar and we omit the corresponding considerations.
\end{proof}
 
\begin{lemma}
\label{L: two letters in a block}
If $\mathbf X,\mathbf Y\in[\mathbf M_\gamma(x^+yzx^+)\vee \mathbf M_\gamma(xx^+yy^+), \mathbf D_{15}]$ and $\mathbf X\wedge\mathbf Y$ satisfies an identity~\eqref{two letters in a block} with $k,\ell\ge 0$ and $a_1,\dots,a_{k+\ell}\in\{1,x,y\}$, then~\eqref{two letters in a block} is satisfied by either $\mathbf X$ or $\mathbf Y$.
\end{lemma}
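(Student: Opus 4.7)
Fix an identity $\mathbf u\approx\mathbf v$ of the form~\eqref{two letters in a block}, and set $\mathtt u:=[\mathbf u]^\gamma$. My plan is to establish, for every variety $\mathbf V$ in the interval $[\mathbf M_\gamma(x^+yzx^+)\vee \mathbf M_\gamma(xx^+yy^+),\mathbf D_{15}]$, the equivalence
\[
\mathbf V\models\mathbf u\approx\mathbf v\iff M_\gamma(\mathtt u)\notin\mathbf V.
\]
Granted this equivalence, the lemma is immediate: if $\mathbf X\wedge\mathbf Y$ satisfies $\mathbf u\approx\mathbf v$, then $M_\gamma(\mathtt u)\notin\mathbf X\wedge\mathbf Y$, so $M_\gamma(\mathtt u)$ must be missing from at least one of $\mathbf X$ and $\mathbf Y$, and that variety satisfies $\mathbf u\approx\mathbf v$.

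The first step is to check that $M_\gamma(\mathtt u)$ itself belongs to $\mathbf D_{15}$ and violates $\mathbf u\approx\mathbf v$. Membership follows from Lemma~\ref{L: M_gamma(W) in V} combined with a routine check that $\mathtt u$ is stable under each of the defining identities of $\mathbf D_{15}$: the identities $\Phi_1$, $xyx\approx x^2yx\approx xyx^2$, and $(xy)^2\approx(yx)^2$ all preserve the pattern $\mathbf h'x^py^q\mathbf t'$ and cannot introduce a $yx$ factor into the central block $x^py^q$. Violation of $\mathbf u\approx\mathbf v$ is immediate: $\mathbf u\in\mathtt u$ represents a non-zero element of $M_\gamma(\mathtt u)$, whereas the alternating factor $xyxy$ of $\mathbf v=\mathbf h(xy)^2\mathbf t$, situated between the simple letters $t_k$ and $t_{k+1}$, cannot occur inside any word $\mathbf h'x^py^q\mathbf t'$ of $\mathtt u$, so $\mathbf v$ represents $0$ in $M_\gamma(\mathtt u)$.

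The implication $M_\gamma(\mathtt u)\in\mathbf V\Rightarrow\mathbf V\not\models\mathbf u\approx\mathbf v$ is then a formal consequence. For the converse, if $M_\gamma(\mathtt u)\notin\mathbf V$, then by Lemma~\ref{L: M_gamma(W) in V} the class $\mathtt u$ is not stable with respect to $\mathbf V$, and so $\mathbf V\models\mathbf u\approx\mathbf w$ for some $\mathbf w\notin\mathtt u$; the task is to deduce $\mathbf V\models\mathbf u\approx\mathbf v$. Since $\mathbf M_\gamma(x^+yzx^+)\subseteq\mathbf V$, Lemma~\ref{L: identities of M(x^+yzx^+)} forces $\mathbf w$ to have the same decomposition pattern as $\mathbf u$ with matching block alphabets. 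The side blocks of $\mathbf w$, having alphabet of size at most one, can be normalized via $x^2\approx x^3$ to coincide with those of $\mathbf u$, while the central block $\mathbf m$ of $\mathbf w$ is a word in $\{x,y\}^+$ with both letters present; stability of $xx^+yy^+$ together with $\mathbf w\notin\mathtt u$ forces $\mathbf m$ to contain a $yx$ factor. A case-by-case normalization of $\mathbf m$ inside $\mathbf D_{15}$, using $(xy)^2\approx(yx)^2$, $xyx\approx x^2yx\approx xyx^2$, and the swaps provided by $\Phi_1$, then reduces $\mathbf m$ to $(xy)^2$, giving $\mathbf V\models\mathbf u\approx\mathbf v$.

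The principal obstacle is this final normalization: one must show that, modulo the identities of $\mathbf D_{15}$ and the contextual side blocks $\mathbf h$ and $\mathbf t$, every admissible central word with a $yx$ factor collapses to $(xy)^2$. This is essentially a combinatorial exercise with $\Phi_1$, mirroring the techniques already used in Lemmas~\ref{L: from pxyqxrys to pyxqxrys} and~\ref{L: from pxx_1..x_kxr to px_1x..x_kxr}.
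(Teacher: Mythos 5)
Your overall route is genuinely different from the paper's: instead of working with a deduction sequence for the identity~\eqref{two letters in a block} inside $\mathbf X\wedge\mathbf Y$, you try to characterize satisfaction of~\eqref{two letters in a block} by the exclusion of the single monoid $M_\gamma([\mathbf u]^\gamma)$ and then use the fact that a monoid missing from $\mathbf X\cap\mathbf Y$ is missing from $\mathbf X$ or from $\mathbf Y$. The reduction of the lemma to your claimed equivalence is sound, and the ``easy'' direction (violation of $\mathbf u\approx\mathbf v$ in $M_\gamma([\mathbf u]^\gamma)$) is fine. But the ``hard'' direction contains a genuine gap at exactly its crux. Your final normalization claim --- that every central block over $\{x,y\}$ containing a $yx$ factor collapses to $(xy)^2$ \emph{modulo the identities of} $\mathbf D_{15}$ --- is false. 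Already for $k=\ell=0$ the identity $y^2x^2\approx(xy)^2$ fails in $M_\gamma(xx^+yy^+)$ (substitute $x\mapsto y$, $y\mapsto x$: the left side becomes the nonzero element $xx^+yy^+$ while the right side becomes $0$), and $M_\gamma(xx^+yy^+)$ lies in $\mathbf D_{15}$, indeed in the bottom of your interval. So a block such as $y^2x^2$, which begins with $y$ and ends with $x$, cannot be rewritten to $(xy)^2$ by $\Phi_1$, $xyx\approx x^2yx\approx xyx^2$ and $(xy)^2\approx(yx)^2$ alone, and your chain $\mathbf V\models\mathbf u\approx\mathbf w\Rightarrow\mathbf V\models\mathbf u\approx\mathbf v$ breaks precisely there.

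There are two known ways to close this hole, and the paper uses one of each in the relevant places. In the proof of Proposition~\ref{P: varieties [M(x^+yzx^+,xx^+yy^+),D_{15}]} the problematic case is handled by invoking the hypothesis identity itself: from $\mathbf h\,x^2y^2\,\mathbf t\approx\mathbf h\,y^2x^2\,\mathbf t$ one derives $\mathbf h\,x^2y^2\,\mathbf t\approx\mathbf h\,x^2(x^2y^2)y^2\,\mathbf t\approx\mathbf h\,(x^2y^2)^2\,\mathbf t\approx\mathbf h\,(xy)^2\,\mathbf t$, so the collapse holds modulo $\mathbf D_{15}$ \emph{together with} $\mathbf u\approx\mathbf w$, not modulo $\mathbf D_{15}$ alone; your phrase ``modulo the identities of $\mathbf D_{15}$ and the contextual side blocks'' does not supply this. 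The paper's actual proof of the present lemma avoids the issue entirely by a different device: it takes a deduction sequence $\mathbf u=\mathbf w_0,\dots,\mathbf w_m=\mathbf v$ in which each step holds in $\mathbf X$ or in $\mathbf Y$, uses Lemma~\ref{L: identities of M(x^+yzx^+)} to see that every $\mathbf w_j$ has the same decomposition pattern, locates the first step $\mathbf w_s\approx\mathbf w_{s+1}$ at which the central block leaves $x^+y^+$, and then substitutes $t_k\mapsto t_kx$ and $t_{k+1}\mapsto yt_{k+1}$. After this substitution the right-hand central block begins with $x$, ends with $y$ and contains $yx$, and only then does it normalize to $(xy)^2$ via the exponent identities and $x^2\approx x^3$; the left-hand block stays in $x^+y^+$. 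That single substituted step, holding in $\mathbf X$ or $\mathbf Y$, is~\eqref{two letters in a block}. Your argument needs one of these two devices; as written, the case where the offending block has the shape $y\cdots x$ is not covered.
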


\begin{proof}
For convenience, denote the left-hand [right-hand] side of the identity~\eqref{two letters in a block} by $\mathbf u$ [respectively, $\mathbf v$]. 
In view of Proposition~\ref{P: deduction}, there exists a finite sequence $\mathbf u = \mathbf w_0, \dots, \mathbf w_m = \mathbf v$ of words such that each identity $\mathbf w_j \approx \mathbf w_{j+1}$ holds in either $\mathbf X$ or $\mathbf Y$.
According to Lemma~\ref{L: identities of M(x^+yzx^+)},
\[
\mathbf w_j\in \biggl(\prod_{i=1}^k a_{i-1}^+t_i\biggr)\mathbf a_j\biggl(\prod_{i=k+1}^{k+\ell} t_ia_i^+\biggr),
\]
where $\alf(\mathbf a_j)=\{x,y\}$ for any $j=0,\dots,m$. 
Then there is $s\in\{0,\dots,m-1\}$ such that $\mathbf a_s\in x^+y^+$ but $\mathbf a_{s+1}\in\overline{x^+y^+}$.
Now substitute $t_kx$ for $t_k$ and $yt_{k+1}$ for $t_{k+1}$ in $\mathbf w_s\approx \mathbf w_{s+1}$, resulting the identity which is equivalent modulo $\{\eqref{xyx=xyxx},\,\eqref{xyx=xxyx}\}$ to~\eqref{two letters in a block}.
We see that~\eqref{two letters in a block} holds in either $\mathbf X$ or $\mathbf Y$.
\end{proof}

\subsection*{Acknowledgments.} 
The author is grateful to Mikhail V. Volkov for useful discussions.

\small


\begin{thebibliography}{99}
\bibitem{Almeida-94} 
Almeida, J.: Finite Semigroups and Universal Algebra. World Scientific, Singapore (1994)

\bibitem{Cossey-69}
Cossey, J.: Critical groups and the lattice of varieties. Proc. Amer. Math. Soc. \textbf{20}, 217--221 (1969)

\bibitem{Edmunds-77} 
Edmunds, C.C.: On certain finitely based varieties of semigroups. Semigroup Forum
\textbf{15}, 21--39 (1977)

\bibitem{Green-51}
Green, J.A.: On the structure of semigroups. Ann. Math. (2) \textbf{54}, 163--172 (1951)

\bibitem{Gusev-20b}
Gusev, S.V.: Standard elements of the lattice of monoid varieties. Algebra i Logika \textbf{59}, 615--626  (2020) (in Russian; Engl. translation: Algebra and Logic \textbf{59}, 415--422 (2021))

\bibitem{Gusev-23}
Gusev, S.V.: Varieties of aperiodic monoids with central idempotents whose subvariety lattice is distributive. Monatsh. Math. \textbf{201}, 79--108 (2023)

\bibitem{Gusev-24}
Gusev, S.V.: Varieties of aperiodic monoids with commuting idempotents whose subvariety lattice is distributive. Semigroup Forum, submitted; see also Preprint: \url{http://arxiv.org/abs/2308.11544v2} (2023)

\bibitem{Gusev-Lee-Vernikov-22}
Gusev, S.V., Lee, E.W.H., Vernikov, B.M.: The lattice of varieties of monoids. Japan. J. Math. \textbf{17}, 117--183 (2022)

\bibitem{Gusev-Sapir-22}
Gusev, S.V., Sapir, O.: Classification of limit varieties of $J$-trivial monoids. Comm. Algebra \textbf{50}, 3007--3027 (2022)

\bibitem{Gusev-Vernikov-18}
Gusev, S.V., Vernikov, B.M.: Chain varieties of monoids. Dissert. Math. \textbf{534}, 1--73  (2018)

\bibitem{Gusev-Vernikov-21}
Gusev, S.V., Vernikov, B.M.: Two weaker variants of congruence permutability for monoid varieties. Semigroup Forum \textbf{103}, 106--152 (2021)

\bibitem{Head-68}
Head, T.J.: The varieties of commutative monoids. Nieuw Arch. Wiskunde. III Ser. \textbf{16}, 203--206 (1968)

\bibitem{Higman-67}
Higman, G.: Representations of general linear groups and varieties of groups. Proc. Internat.
Conf. Theory of Groups. Austral. Nat. Univ. Canberra 1965. Gordon and Breach
1967.

\bibitem{Jackson-05}
Jackson, M.: Finiteness properties of varieties and the restriction to finite algebras. Semigroup Forum \textbf{70}, 159--187 (2005)

\bibitem{Jackson-Lee-18}
Jackson, M., Lee, E.W.H.: Monoid varieties with extreme properties. Trans. Amer. Math. Soc. \textbf{370}, 4785--4812 (2018)

\bibitem{Lee-12}
Lee, E.W.H.: Varieties generated by $2$-testable monoids. Studia Sci. Math. Hungar. \textbf{49}, 366--389  (2012)

\bibitem{Lee-14}
Lee, E.W.H.: On certain Cross varieties of aperiodic monoids with commuting idempotents,  	Results Math. \textbf{66}, 491--510 (2014)

\bibitem{Lee-23}
Lee, E.W.H.: A minimal pseudo-complex monoid, Arch. Math. \textbf{120}, 15--25 (2023)

\bibitem{Lee-Li-11}
Lee, E.W.H., Li, J.R.: Minimal non-finitely based monoids, Dissert. Math., \textbf{475}, 1--65 (2011)

\bibitem{Kovacs-Newman-71}
Kov\'acs, L.G., Newman, M.F.: On non-cross varieties of groups. J. Austral. Math. Soc. \textbf{12}, 129--144 (1971)

\bibitem{Kozhevnikov-12}
Kozhevnikov, P.A.: On nonfinitely based varieties of groups of large prime exponent. Comm. Algebra \textbf{40}, 2628--2644 (2012)

\bibitem{Neumann-67} 
Neumann, H.: Varieties of groups. Springer-Verlag, Berlin (1967)

\bibitem{Perkins-69} 
Perkins, P.: Bases for equational theories of semigroups. J. Algebra \textbf{11}, 298--314  (1969)

\bibitem{Pin-86} 
Pin, J.-\'E.: Varieties of Formal Languages. North Oxford Academic, London (1986)

\bibitem{Romankov-70}
Roman'kov, V.A.: Nondistributivity of the lattice of varieties of nilpotent groups. Algebra i Logika \textbf{9}, 67--72 (1970) (in Russian)

\bibitem{Sapir-87}
Sapir, M.V.: Problems of Burnside type and the finite basis property in varieties of semigroups. Izv. Akad. Nauk SSSR, Ser. Mat. \textbf{51}, 319--340 (1987) (in Russian; English translation: Mathematics of the USSR--Izv. \textbf{30}, 295--314 (1988))

\bibitem{Sapir-15} 
Sapir, O.: Non-finitely based monoids. Semigroup Forum \textbf{90}, 557--586 (2015)

\bibitem{Sapir-18} 
Sapir, O.: Lee monoids are non-finitely based while the sets of their isoterms are finitely based. Bull. Aust. Math. Soc. \textbf{97}, 422--434 (2018)

\bibitem{Sapir-21} 
Sapir, O.: Limit varieties of $J$-trivial monoids. Semigroup Forum \textbf{103}, 236--260 (2021)

\bibitem{Sapir-23} 
Sapir, O.: Limit varieties generated by finite non-$J$-trivial aperiodic monoids. Semigroup Forum \textbf{107}, 732--750 (2023)

\bibitem{Shevrin-Vernikov-Volkov-09}
Shevrin, L.N., Vernikov, B.M., Volkov, M.V.: Lattices of semigroup varieties. Izv. Vyssh. Uchebn. Zaved. Matem. (3), 3--36 (2009) (in Russian; Engl. translation: Russian Math. (Iz. VUZ) \textbf{53}(3), 1--28 (2009))

\bibitem{sverdlovsk-tetrad}
The Sverdlovsk Notebook, 2nd ed., Ural State University, Sverdlovsk  (1979) (in Russian)

\bibitem{Wismath-86}
Wismath, S.L.: The lattice of varieties and pseudovarieties of band monoids. Semigroup Forum, \textbf{33}, 187--198 (1986)

\bibitem{Zhang-Luo-19}
Zhang W.T., Luo, Y.F.: A new example of limit variety of aperiodic monoids. Preprint: \url{http://arxiv.org/abs/1901.02207} (2019)
\end{thebibliography}
\end{document}